 \newtheorem{theorem}{Theorem}[section]
 \newtheorem{proposition}[theorem]{Proposition}
 \newtheorem{corollary}[theorem]{Corollary}
\theoremstyle{definition}
 \newtheorem{definition}[theorem]{Definition}
 \newtheorem{example}[theorem]{Example}
 \newtheorem*{acknowledgements}{Acknowledgements}
\numberwithin{equation}{section}
\newcommand{\R}{\boldsymbol{R}}
\newcommand{\Z}{\boldsymbol{Z}}
\newcommand{\A}{\mathcal{A}}
\renewcommand{\phi}{\varphi}
\renewcommand{\Gamma}{\varGamma}
\newcommand{\ep}{\varepsilon}
\begin{document}
\title{Geometric deformations of cuspidal $S_1$ singularities}
\author{Runa Shimada}
\date{}
\maketitle
\footnote[0]{ 2020 Mathematics Subject classification. Primary
57R45; Secondary 53A05.}
\footnote[0]{Keywords and Phrases. cuspidal $S_1$ singularity, deformations, normal form,  minimal frontalization}
\begin{abstract}
To study a deformation of a singularity
taking into
consideration their differential geometric properties,
a form representing the deformation
using only diffeomorphisms on the source space and isometries of the target space
plays a crucial role.
Such a form for an $S_1$ singularity is obtained by the author's previous work.
On this form, we give a necessary and sufficient
condition for such a map is being a frontal.
The form for an $S_1$ singularity with the frontal condition 
can be considered such a form for a cuspidal $S_1$ singularity.
Using this form, 
we investigate geometric properties of cuspidal $S_1$ singularities
and the cuspidal cross caps appearing in the deformation.
\end{abstract}

\section{Introduction}
Singularities are deformed and turn into various singularities.
 The cuspidal $S_1^\pm$ singularities are known as codimension one singularities of frontals.
 The codimension zero singularities of frontals are cuspidal cross caps
  and the codimension one singularities correspond to the appearance/disappearance of codimension zero singularities.  
 The purpose of this paper is to investigate geometric properties of cuspidal $S_1^\pm$ singularities including the deformation.
  The appearance/disappearance of the cuspidal cross caps can be seen in deformations of the cuspidal $S_1^\pm$ singularities. 
An $SO(3)$-{\it normal form} (a {\it normal form} for short)
is a formula for a singular point reducing coefficients
as much as possible by using a diffeomorphism-germ on
the source space and an isometry-germ on the target space.
By the construction, since the coefficients of such a form are geometric invariants of the singular point,
it plays a crucial role to investigate geometric properties of the singularity.
This kind of form is given in \cite{bw,west} for the
Whitney umbrella and is called the Bruce-West normal form,
and there are several papers studying geometry on Whitney umbrella using this form
\cite{bk,diastari,fh-fronts,gsg,hhnsuy,hayashi,hnuy,sym,tari}. 
In \cite{shimada}, extending the notion of the normal form,
a normal form of the $S_1$ singularity including a deformation parameter is given.

In this paper, refining this form, we give a normal form whose $2$-jet is $\A$-equivalent to
$(u,v^2,0)$ or $(u,v^2,uv)$ including deformation parameter (\eqref{eq:normals1} in {Theorem \ref{thm:normals1}).
Moreover, we give a necessary and sufficient condition that \eqref{eq:normals1} is a frontal.
Using this fact, we construct a normal form of the cuspidal $S_k^\pm$ singularities $(k\in\Z_{\geq 0})$
including a deformation parameter ({Corollary \ref{cor:normal}).
Using the form,
we consider the geometric properties of the deformation when $k = 1$.
In the generic deformation of the 
cuspidal $S_1^\pm$ singularities, two cuspidal cross caps merge into a single singularity and then disappear, or vice versa.
To study cuspidal $S_1^\pm$ singularities, we look at geometry on cuspidal
cross caps appearing small deformations of cuspidal $S_1^\pm$
singularities.
We study the asymptotic behavior
of the self-intersection curve appearing in the deformation of cuspidal $S_1$ singularities.
We calculate the bias and secondary cuspidal curvature at the cuspidal cross cap appearing on the deformation of cuspidal $S_1^\pm$ singularities.

The cuspidal $S_k^\pm$ $(k\in\Z_{\geq 0})$ singularities
are map-germs  $\A$-equivalent to the map-germ defined by 
$
(u, v) \mapsto (u, v^2, v^3(u^{k+1}\pm v^2))
$
at the origin. If $k$ is even, then the cuspidal $S_k^+$ singularity
and the cuspidal $S_k^-$ singularity are $\A$-equivalent.
The cuspidal $S_0$ singularity is also called a {\it cuspidal cross cap}.
See  the center figures of Figures \ref{fig:cuspidals1plus}  and \ref{fig:cuspidals1minus}.
Here, two map-germs $f_1:(\R^2,0)\to(\R^3,0)$ and $f_2:(\R^2,0)\to(\R^3,0)$ are {\it $\A$-equivalent} if there exist a coordinate change of source space $\phi$ and a coordinate change of target space $\psi$ such that $f_2=\psi\circ f_1\circ \phi^{-1}$.
A map-germ $f:(\R^2,0)\to (\R^3,0)$ is called a {\it frontal} if there exist 
a unit normal vector field $\nu:(\R^2,0)\to\R^3$ along  $f$ such that $\langle df(X),\nu\rangle=0$ holds  for any $p\in(\R^2,0)$ and $X \in T_p\R^2$.
Let us set $\lambda=\det(f_u, f_v,\nu)$ for a coordinate system $(u,v).$
This $\lambda$ is called the {\it identifier of singularities}, and 
$\lambda$ satisfies $S(f)=\lambda^{-1}(0)$, where $S(f)$ is the set of the singular points of $f$.
A singular point $p\in S(f)$ is said to be {\it non-degenerate} if $d\lambda(p)\ne0$ holds.
If $f:(\R^2,0) \to (\R^3,0)$ is a cuspidal $S_k^\pm$ singularity $(k\in\Z_{\geq 0})$,
then it is a frontal, and the singular point of it is non-degenerate.

\section{Normal forms of cuspidal $S_k^\pm$ singularities including deformations}

To study geometric deformations of
cuspidal $S_1^\pm$ singularities and their
frontality, we give a normal form.
In this paper, we consider $1$-parameter deformation of cuspidal $S_k^\pm$ singularity.
A map-germ $f:(\R^2 \times \R,0) \to (\R^3,0)$ is a {\it deformation of $g:(\R^2,0) \to (\R^3,0)$},  if it is smooth and $f(u, v, 0)=g(u,v)$ and $f(0,0,s)=(0,0,0)$.
In this definition, the parameter $s$ as the third component of the source space is called the {\it deformation parameter}. We define an equivalence relation between two deformations preserving the deformation parameters.
\begin{definition}\label{def:deformeq}
Let $f_1, f_2 : (\R^2 \times \R, 0) \to (\R^3, 0)$ be deformations of $g:(\R^2,0)\to(\R^3,0)$. Then $f_1$ and
$f_2$ are {\it equivalent as deformations} if there exist orientation preserving diffeomorphism-germs $\phi:(\R^2 \times \R,0) \to (\R^2 \times \R,0)$
with the form
\begin{eqnarray}
\phi(u,v,s)=(\phi_1(u,v,s),\phi_2(u,v,s),\phi_3(s))\quad
\left(
\dfrac{d\phi_3}{ds}(0)>0\right)
\label{eq:phi}
\end{eqnarray}
and 
$\psi:(\R^3 ,0) \to (\R^3,0)$
such that
$\psi \circ f_1 \circ \phi^{-1}(u,v,s)=f_2(u,v,s)$ holds.
\end{definition}
See (\cite[Section 2]{shimada}) for detail.

Let ${f_s}^\pm$ be germs defined by
$${f_s}^\pm:(\R^2 \times \R,0) \ni (u,v,s) \mapsto (u,v^2,v^3(u^2\pm v^2)+sv^3) \in (\R^3,0).$$
Then ${f_s}^+$ $({f_s}^-)$ is a deformation of a cuspidal 
$S_1^+$ $(S_1^-) $ singularity which is
a typical deformation of the cuspidal $S_1^+$ $(S_1^-) $ singularity.
We can observe two cuspidal cross caps appear in the deformation.
See Figure \ref{fig:cuspidals1plus} and Figure \ref{fig:cuspidals1minus}.
\begin{figure}[h!]
\centering
\includegraphics[width=70mm]{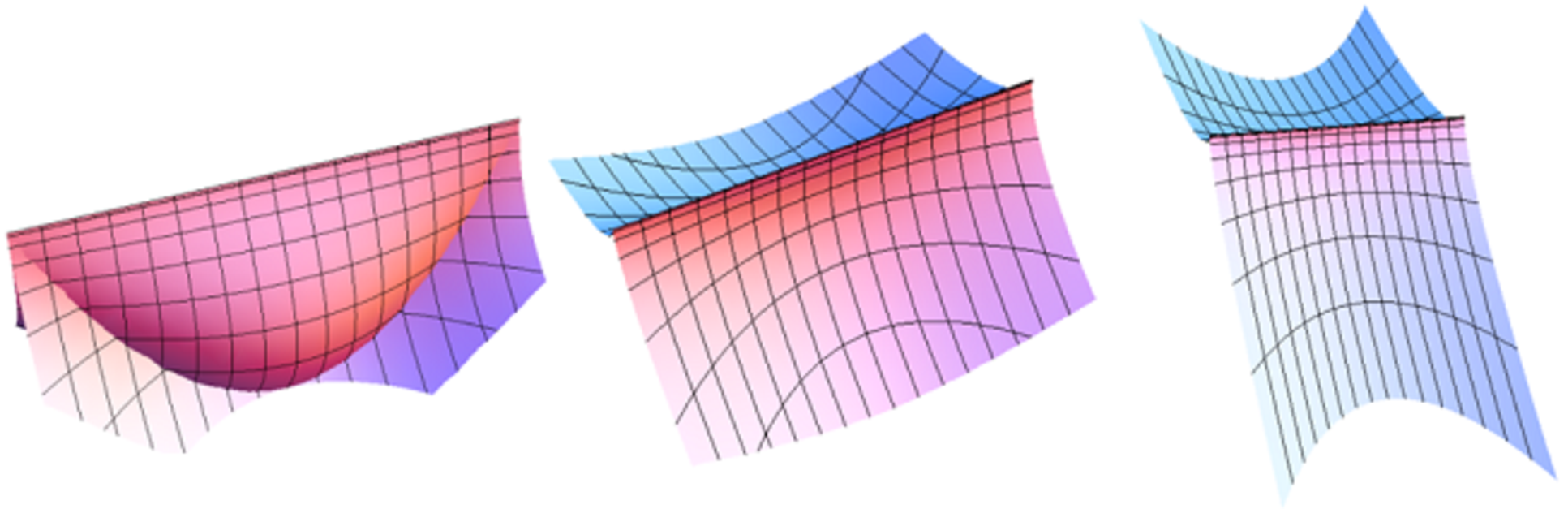}
\caption{Deformation of cuspidal $S_1^+$ singularity $($from left to right $f_{-1}^+, f_{0}^+$ and $f_{1}^+)$}
\label{fig:cuspidals1plus}
\end{figure}

\begin{figure}[htbp]
\centering
\includegraphics[width=70mm]{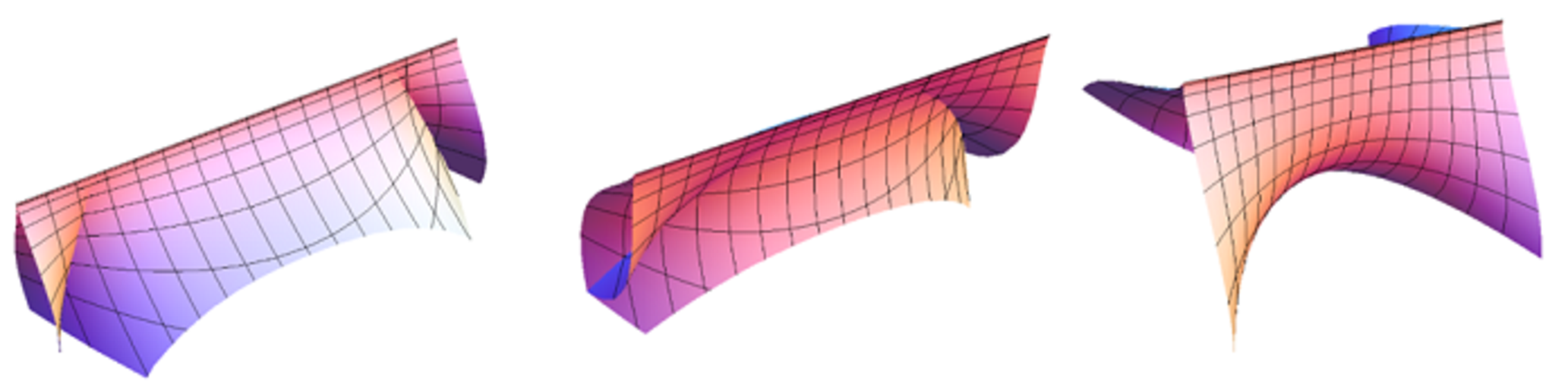}
\caption{Deformation of cuspidal $S_1^-$ singularity $($from left to right $f_{-1}^-,  f_{0}^-, $ and $f_{1}^-)$}
\label{fig:cuspidals1minus}
\end{figure}

\begin{theorem}{\rm(\cite[Theorem 2.3]{shimada})}\label{thm:normals1}
Let $ f : (\R^2 \times \R, 0) \to (\R^3, 0)$ be a deformation of 
$g:(\R^2, 0) \to (\R^3, 0)$ such that
the $2$-jet of $g$ is $\A$-equivalent to $(u,v^2,0)$ or $(u,v^2,uv)$.
Then there exist an orientation preserving diffeomorphism-germ $\phi : (\R^2 \times \R, 0) \to (\R^2 \times \R, 0)$ with the form \eqref{eq:phi},
$T \in SO(3)$ and the functions $f_{21}, f_{31} \in C^\infty(1,1), f_{24}, f_{33}, f_{34} \in C^\infty(2,1), f_{32} \in C^\infty(3,1)$ such that
\begin{eqnarray}
{f_{{\rm n1}}}^s&=&
T \circ f \circ \varphi(u,v,s)\nonumber\\ 
&=&(u,u^2f_{21}(u)+v^2+usf_{24}(u,s),\label{eq:normals1}\\
& &\hspace{30mm}
u^2f_{31}(u)+v^2f_{32}(u,v,s)+vf_{33}(u,s)+usf_{34}(u,s)),\nonumber
\end{eqnarray}\nonumber
where
$f_{32}(0,0,0)=f_{33}(0,0)=0$.
If the $2$-jet of $g$ is $\A$-equivalent to $(u,v^2,0)$, then $(f_{33})_u(0,0)=0 $ holds, and if it is $\A$-equivalent to $(u,v^2,uv)$, then
$(f_{33})_u(0,0)\ne0$ holds.
\end{theorem}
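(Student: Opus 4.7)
The plan is to construct the diffeomorphism $\phi$ and rotation $T$ by a sequence of elementary normalizations, each preserving the form \eqref{eq:phi} with $\phi_3(s)=s$ and each using only elements of $SO(3)$ on the target. I first handle $g=f(\cdot,\cdot,0)$ by an $s$-independent normalization: rotate by an element of $SO(3)$ so that the image of $dg(0,0)$ is the $x$-axis, which makes the $u$-derivative of the first component non-zero at the origin; then apply the implicit function theorem to replace the first coordinate so that the first component of $f$ becomes $u$. A further $SO(2)$ rotation in the $(y,z)$-plane kills the coefficient of $v^2$ in the third component at the origin, which is possible because in both 2-jet models $(u,v^2,0)$ and $(u,v^2,uv)$ the third-slot $v^2$-coefficient is zero. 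After normalizing the sign and scale of $v$, the 2-jet of $g$ takes the shape $(u,\,au^2+buv+v^2,\,du^2+euv)$.

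I then extend these normalizations to depend smoothly on $s$. The parametrized implicit function theorem makes the first component of $f(u,v,s)$ equal to $u$ via a source change $(u,v,s)\mapsto(\phi_1(u,v,s),v,s)$. Writing the new second component as $A(u,v,s)v^2+B(u,s)v+C(u,s)$, the completion-of-square substitution $v\mapsto v-B(u,s)/(2A(u,v,s))$ followed by the rescaling $v\mapsto v/\sqrt{A}$ are source changes of the form $(u,v,s)\mapsto(u,\phi_2(u,v,s),s)$, preserving \eqref{eq:phi}. After these steps the second component equals $v^2+C(u,s)$.

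The normal form is now read off via Hadamard's lemma. Since $f(0,0,s)=0$, we have $C(0,s)=0$, whence $C(u,s)=u\widetilde C(u,s)$. Writing $\widetilde C(u,s)=\widetilde C(u,0)+s\int_0^1(\widetilde C)_s(u,ts)\,dt$ and using that $C(u,0)$ is divisible by $u^2$ (because the linear part of $g$ in its second slot vanishes after the initial reductions), one gets $\widetilde C(u,0)=uf_{21}(u)$ and $C(u,s)=u^2f_{21}(u)+usf_{24}(u,s)$. Decomposing the third component as $E(u,v,s)v^2+G(u,s)v+H(u,s)$, the analogous argument gives $H(u,s)=u^2f_{31}(u)+usf_{34}(u,s)$. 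One sets $f_{33}=G$, which satisfies $f_{33}(0,0)=0$ because the $v$-coefficient of the 2-jet of $g$ in the third slot vanishes in both models, and $f_{32}=E$, which satisfies $f_{32}(0,0,0)=0$ thanks to the $SO(2)$ rotation above. Finally $(f_{33})_u(0,0)$ equals the $uv$-coefficient of the 2-jet of $g$ in the third slot, which is $0$ in the $(u,v^2,0)$ case and non-zero in the $(u,v^2,uv)$ case.

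The main obstacle I expect is tracking the $SO(3)$ restriction carefully: normalizations that would be immediate under general target diffeomorphisms --- especially killing the $u^2f_{21}$ or $u^2f_{31}$ summands by subtracting multiples of the first coordinate --- are unavailable through rotations and therefore persist as the geometric invariants appearing in the normal form. A secondary subtlety is orientation: the composite source diffeomorphism in $(u,v)$ might be orientation-reversing, which is corrected by an extra $v\mapsto -v$ together with a compensating axis-flip in $SO(3)$; this affects $f_{33}$ only by a sign and preserves the stated distinction between the two 2-jet cases.
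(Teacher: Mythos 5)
The paper does not actually prove Theorem \ref{thm:normals1} here: it defers entirely to \cite[Theorem 2.3]{shimada} and only remarks that the argument there extends to the $(u,v^2,uv)$ case. So your construction cannot be checked line by line against a proof in this text. Your overall scheme (make the first component $u$ by the implicit function theorem, rotate in the $(y,z)$-plane to kill the $v^2$-coefficient of the third slot, complete the square in $v$, then apply Hadamard's lemma) is the standard Bruce--West procedure and is fine on the slice $s=0$.

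The gap is in the parametrized step. After you write the second component as $A(u,v,s)v^2+B(u,s)v+C(u,s)$ and shift $v\mapsto v-B/(2A)$, the constant term of the result is not $C(u,s)$ but the critical value $C(u,s)-B(u,s)^2/(4A)+\cdots$. The deformation condition $f(0,0,s)=0$ gives $C(0,s)=0$ \emph{before} the shift, but it says nothing about $B(0,s)$, which is in general a nonzero $O(s)$ quantity; hence after the shift the constant term at $u=0$ is $-B(0,s)^2/(4A)+\cdots=O(s^2)$ and is \emph{not} divisible by $u$. Your step ``since $f(0,0,s)=0$ we have $C(0,s)=0$, whence $C=u\widetilde C$'' therefore fails, and you cannot reach the form $u^2f_{21}(u)+v^2+usf_{24}(u,s)$: for $f(u,v,s)=(u,v^2+sv,v^3)$ your procedure outputs a second component $v^2-s^2/4+\cdots$. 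The same defect recurs in the third component. Relatedly, you never explain how the $v$-linear term is eliminated from the second component while being \emph{retained} as $vf_{33}(u,s)$ in the third: since $T$ is a single $s$-independent rotation and translations of the target are forbidden, this amounts to forcing $f_v(0,0,s)$ for every $s$ into the fixed line $T^{-1}\left(\mathrm{span}\{e_3\}\right)$, a nontrivial alignment that your elementary, componentwise normalizations do not produce. This is precisely where the deformation parameter makes the statement delicate, and it is the step you would need to extract from \cite[Theorem 2.3]{shimada} (or supply yourself) for the argument to close.
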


See \cite[Theorem 2.3]{shimada} for the proof of the case that the 
$2$-jet of $g$ is $\A$-equivalent to $(u,v^2,0)$.
The proof also works for the case that the $2$-jet of $g$ is $\A$-equivalent to $(u,v^2,0)$ or $(u,v^2,uv)$.
In Theorem \ref{thm:normals1}, the given $f$ and ${f_{{\rm n1}}}^s$ are equivalent as deformations (Definition \ref{def:deformeq}), and they
have the same differential geometric properties.
The uniqueness of the normal form holds, see \cite[Proposition 2.5]{shimada},
and the proof also work for that the case of the $2$-jet of $g$
is $\A$-equivalent to $(u,v^2,0)$ or $(u,v^2,uv)$.
For the frontality of the form  ${f_{{\rm n1}}}^s$, we have the following theorem.
 \begin{theorem}\label{thm:fr}
 A map-germ ${f_{{\rm n1}}}^s$ is frontal for any $s$ if and only if $f_{33}(u,s)=0$ holds identically.
 \end{theorem}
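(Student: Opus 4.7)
The plan is to characterize frontality via the existence of a smooth unit normal $\nu(u,v,s) = (\nu_1, \nu_2, \nu_3)$ of the deformation satisfying $\nu \cdot f_u = \nu \cdot f_v = 0$ and $|\nu|=1$. From \eqref{eq:normals1},
\[
f_v = \bigl(0,\; 2v,\; f_{33}(u,s) + v\,h(u,v,s)\bigr), \qquad h := 2f_{32} + v(f_{32})_v,
\]
so the relation $\nu \cdot f_v = 0$ becomes $v(2\nu_2 + h\nu_3) + f_{33}(u,s)\nu_3 = 0$.

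For the sufficiency direction, if $f_{33} \equiv 0$ then $f_v = v\,\tilde{f}_v$ with $\tilde{f}_v := (0,2,h)$ smooth and nowhere zero (middle entry $2$). The third component of $f_u \times \tilde{f}_v$ equals $2$, so this cross product is smooth and nowhere zero; its normalization is a smooth unit normal, perpendicular to $f_u$ and to $f_v = v\tilde{f}_v$.

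For the necessity direction, suppose $\nu$ is a smooth unit normal and, for contradiction, that $f_{33}$ is not identically zero as a germ at $(0,0)$. Setting $v=0$ in the relation above gives $f_{33}(u,s)\,\nu_3(u,0,s) = 0$, hence $\nu_3(u,0,s) = 0$ on the open set $U := \{(u,s) : f_{33}(u,s) \ne 0\}$. Since $f_{33} \not\equiv 0$ forces $(0,0) \in \overline{U}$, continuity of $\nu$ yields $\nu_3(0,0,0) = 0$. Differentiating $\nu \cdot f_v = 0$ in $v$ and restricting to $v=0$ gives
\[
f_{33}\,(\nu_3)_v + 2\nu_2 + 2f_{32}\,\nu_3 = 0,
\]
which at the origin, using $f_{33}(0,0) = f_{32}(0,0,0) = 0$ and $\nu_3(0,0,0) = 0$, reduces to $\nu_2(0,0,0) = 0$. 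Finally, a direct computation from \eqref{eq:normals1} shows $f_u(0,0,0) = (1,0,0)$, so $\nu \cdot f_u = 0$ at the origin forces $\nu_1(0,0,0) = 0$. Thus $\nu(0,0,0) = 0$, contradicting $|\nu|=1$, so $f_{33} \equiv 0$.

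The main obstacle is the necessity direction in the case where $f_{33}$ vanishes on a large set near the origin: one directly controls $\nu_3$ only on $U$, but the continuity of $\nu$ combined with the accumulation of $U$ at the origin is enough to conclude $\nu_3(0,0,0)=0$, after which the $v$-derivative identity and the explicit form $f_u(0,0,0)=(1,0,0)$ successively force $\nu_2$ and $\nu_1$ to vanish at the origin, collapsing $\nu(0,0,0)$ to zero.
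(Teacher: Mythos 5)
Your proof is correct and follows essentially the same route as the paper: the relation $f_{33}(u,s)\nu_3(u,0,s)=0$, the identity from differentiating $\nu\cdot f_v\equiv 0$ in $v$ (equivalently the paper's $f_{vv}\cdot\nu=-f_v\cdot\nu_v$), and $f_u(0,0,0)=(1,0,0)$ together force the contradiction $\nu(0,0,0)=0$, which is the same computation the paper phrases as $f_{vv}\cdot\nu(0,0,0)=2\ne 0$. Your explicit factorization $f_v=v\tilde f_v$ with nonvanishing $f_u\times\tilde f_v$ simply fills in the ``direct calculation'' the paper omits in the sufficiency direction.
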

 \begin{proof}
 We assume that $f_{33}(u,s)\equiv0$,where $\equiv$ implies that the equality holds identically. By a direct calculation, one can obtain a unit normal vector $\nu$ such that $({f_{{\rm n1}}}^s)_u\cdot\nu=({f_{{\rm n1}}}^s)_v\cdot\nu=0$.
 On the other hand, we assume that ${f_{{\rm n1}}}^s$ is a frontal.
 We set a unit normal vector of ${f_{{\rm n1}}}^s$ to be $\nu=(\nu_1,\nu_2,\nu_3)$.
 By the  definition of frontality it hold that
\begin{eqnarray}
({f_{{\rm n1}}}^s)_v\cdot\nu(u,0,s)=f_{33}(u,s)\nu_3(u,0,s)=0.\label{thm:frontal}
\end{eqnarray}
We assume $\nu_3(0,0,0)=0$.
Then since $f_u(0,0,0)=(1,0,0)$, it hold that $\nu(0,0,0)=(0,1,0)$.
Thus $({f_{{\rm n1}}}^s)_{vv}\cdot\nu(0,0,0)=2.$
Here, $({f_{{\rm n1}}}^s)_{vv}\cdot\nu(0,0,0)$ can rewrite $-({f_{{\rm n1}}}^s)_v\cdot\nu_v(0,0,0)$, however since
 $\operatorname{Ker}d{f_{{\rm n1}}}^s =\langle \partial_v\rangle$, it holds that $({f_{{\rm n1}}}^s)_v(0,0,0)=0$.
 This is a contradiction.
 Therefore $\nu_3(0,0,0)\ne0$, and this implies $f_{33}(u,s)=0$ holds identically by \eqref{thm:frontal}.
\end{proof}

\begin{corollary}\label{cor:normal}
Let $f:(\R^2 \times \R,0) \to (\R^3,0)$ be a deformation of $g:(\R^2,0)\to(\R^3,0)$ such that the $2$-jet of $g$ is $\A$-equivalent to $(u,v^2,0)$ or $(u,v^2,uv)$ and $f$  is frontal for any $s$.
Then there exist an orientation preserving diffeomorphism-germ $\phi : (\R^2 \times \R, 0) \to (\R^2 \times \R, 0)$ with the form \eqref{eq:phi}, 
$T \in SO(3)$ and the functions $f_{21}, f_{31} \in C^\infty(1,1), f_{24}, f_{34} \in C^\infty(2,1), f_{32} \in C^\infty(3,1)$ such that
\begin{eqnarray}
{f_{{\rm n2}}}^s&=&
T \circ f \circ \varphi(u,v,s)\nonumber\\ 
&=&(u,u^2f_{21}(u)+v^2+usf_{24}(u,s),\label{eq:normal}\\
& &\hspace{30mm}
u^2f_{31}(u)+v^2f_{32}(u,v,s)+usf_{34}(u,s)),\nonumber
\end{eqnarray}\nonumber
where
\begin{eqnarray}
f_{32}(u,v,s)=c_{0}(u,s)+vc_{1}(u,s)+v^2c_{2}(u,v^2,s)+v^3c_{3}(u,v^2,s)\label{f32}
\end{eqnarray}
 and
 $f_{32}(0,0,0)=c_{0}(0,0)=0$.
Furthermore, $f(u,v,0)=g(u,v)$ is a cuspidal $S_k^+$ singularity $($respectively, cuspidal $S_k^-$ singularity$)$  if and only if  
$\partial^{i} c_{1}/\partial u^{i}(0,0,0)=0$ $(i=1,\ldots,k)$ and
 $\partial^{k+1}c_{1} /\partial u^{k+1}(0,0,0) c_{3}(0,0,0) >0$ $($respectively, $<0)$ hold $(k\in\Z_{\geq 0})$.
 If $(dc_{1}/ds)(0,0) \ne 0$, then one can further reduce $c_{1}(0,s)=s$.
 \end{corollary}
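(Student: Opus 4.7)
The plan is to build on Theorems \ref{thm:normals1} and \ref{thm:fr}, then apply additional $\A$-equivalences to extract the cuspidal $S_k^\pm$ criterion. First I apply Theorem \ref{thm:normals1} to $f$, obtaining an equivalent deformation ${f_{{\rm n1}}}^s$ in the form \eqref{eq:normals1}. The equivalence uses only an orientation-preserving source diffeomorphism of the form \eqref{eq:phi} and a target isometry $T\in SO(3)$, both of which preserve frontality of each slice; hence ${f_{{\rm n1}}}^s$ is frontal in $(u,v)$ for every $s$, and Theorem \ref{thm:fr} forces $f_{33}(u,s)\equiv 0$, producing the form \eqref{eq:normal}.

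Next I establish the decomposition \eqref{f32} as a parametric even/odd splitting in $v$. Any smooth germ $F(u,v,s)$ decomposes uniquely as $F = A(u,v^2,s) + v\,B(u,v^2,s)$ with $A, B$ smooth, and each of $A$ and $B$ further splits as its value at $v=0$ plus $v^2$ times a smooth remainder. Applied to $f_{32}$, this gives \eqref{f32} with $c_0(u,s) = f_{32}(u,0,s)$ and $c_1(u,s) = (f_{32})_v(u,0,s)$; the normalization $c_0(0,0) = f_{32}(0,0,0) = 0$ is inherited from Theorem \ref{thm:normals1}.

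To classify the slice $g(u,v) = f(u,v,0)$ up to $\A$-equivalence with the model $(u,v^2,v^3(u^{k+1}\pm v^2))$, I apply further target and source diffeomorphisms, no longer restricted to those preserving the normal form. A single target diffeomorphism that replaces $Y$ by $Y - X^2 f_{21}(X)$ and subtracts from $Z$ the polynomial combinations $X^2 f_{31}(X)$, $c_0(X,0)(Y-X^2 f_{21}(X))$, and $c_2(X,Y-X^2 f_{21}(X),0)(Y-X^2 f_{21}(X))^2$ reduces $g$ to the shape
\begin{equation*}
\bigl(u,\,v^2,\,v^3 c_1(u,0) + v^5 c_3(u,v^2,0)\bigr).
\end{equation*}
The conditions $\partial^i c_1/\partial u^i(0,0)=0$ for $i\le k$ together with $\partial^{k+1}c_1/\partial u^{k+1}(0,0)\neq 0$ are then equivalent to a factorization $c_1(u,0) = u^{k+1}\alpha(u)$ with $\alpha(0)\neq 0$. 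A source change $u\mapsto\tilde u$ absorbs $\alpha$, and a $v$-scaling compensated by a rescaling of $Y$ and $Z$ on the target normalizes the $v^5$ coefficient to $\pm 1$, with the sign being exactly that of $\partial^{k+1}c_1/\partial u^{k+1}(0,0)\cdot c_3(0,0,0)$, yielding the cuspidal $S_k^+$ versus $S_k^-$ distinction.

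Finally, when $(dc_1/ds)(0,0)\neq 0$, the map $s\mapsto c_1(0,s)$ is a local diffeomorphism of $(\R,0)$ since $c_1(0,0)=0$, so taking $\phi_3(s) = c_1(0,s)$ as the deformation-parameter reparametrization permitted by \eqref{eq:phi} yields $c_1(0,s) = s$ in the new parameter; a sign adjustment via a compatible target rotation is invoked if needed to preserve $\phi_3'(0)>0$. The main obstacle is the classification step: because the source changes that remain available after the target simplification are tightly constrained by the requirement to preserve the form $(u,v^2,\cdot)$, verifying that the order-of-vanishing condition on $c_1$ together with the sign condition on the product $\partial^{k+1}c_1/\partial u^{k+1}(0,0)\cdot c_3(0,0,0)$ is both necessary and sufficient demands careful tracking of the allowed $\A$-equivalences and of the residual freedom in the $v$-direction.
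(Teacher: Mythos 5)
Your first and last steps coincide with the paper's: the paper also obtains \eqref{eq:normal} by applying Theorem \ref{thm:normals1}, invoking Theorem \ref{thm:fr} to conclude $f_{33}(u,s)\equiv 0$ (your observation that the equivalence of Theorem \ref{thm:normals1} preserves frontality of each slice is the needed justification), and normalizes $c_1(0,s)=s$ by reparametrizing the deformation parameter exactly as you do. The splitting \eqref{f32} is, as you say, just the even/odd decomposition in $v$. Where you diverge is the characterization of when $g$ is a cuspidal $S_k^\pm$ singularity: the paper does not attempt a reduction to the model germ at all, but simply applies the recognition criterion of \cite[Theorem 3.2]{saji} to the form \eqref{eq:normal}, which delivers both directions of the equivalence, including the sign condition on $\partial^{k+1}c_1/\partial u^{k+1}(0,0,0)\,c_3(0,0,0)$, in one stroke.

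Your alternative route is workable for the ``if'' direction --- the shear on the target that you describe does reduce $g$ to $(u,v^2,v^3c_1(u,0)+v^5c_3(u,v^2,0))$, and from there the factorization $c_1(u,0)=u^{k+1}\alpha(u)$ plus scalings gets you to the model when $c_3(0,0,0)\neq 0$. But the part you flag as ``the main obstacle'' is a genuine gap, and it is precisely the content that the citation to \cite{saji} supplies: you must show (a) that the order of vanishing of $c_1(\cdot,0)$ at $0$, the nonvanishing of $c_3(0,0,0)$, and the sign of the product $\partial^{k+1}c_1/\partial u^{k+1}(0,0,0)\,c_3(0,0,0)$ are invariants of the $\A$-class (the ``only if'' direction, e.g.\ that $c_3(0,0,0)=0$ excludes every $S_k^\pm$), and (b) that the residual source changes preserving the shape $(u,v^2,\ast)$ --- essentially $u\mapsto \tilde u(u)$ and $v\mapsto v\,\beta(u,v^2)$ together with $v\mapsto -v$ --- cannot flip that sign. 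Neither is done in your sketch, and (b) in particular requires checking that the coefficient of $u^{k+1}v^3$ and the coefficient of $v^5$ always rescale by factors whose product is positive. Either carry out that invariance computation explicitly or, as the paper does, quote the recognition criterion for cuspidal $S_k$ singularities, which is stated exactly for germs in the form you reach.
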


\begin{proof}
By Theorem \ref{thm:normals1}, substituting $f_{33}(u,s)=0$ into 
\eqref{eq:normals1}, we have the assertion.
 Since $f(u,v,0)$ at $(0,0)$ is neither a cuspidal edge nor a cuspidal cross cap, $c_{1}(0,0)=0$ and $(c_{1})_{u}(0,0)=0$ hold.
By \cite[Theorem 3.2]{saji}, we see that ${f_{{\rm n2}}}^s$ of the form
\eqref{eq:normal} with these conditions is
a cuspidal $S_k^+$ singularity $($respectively, cuspidal $S_k^-$ singularity$)$ if and only if
$\partial^{i} c_{1}/\partial u^{i}(0,0,0)=0$ $(i=1,\ldots,k)$ and
 $\partial^{k+1}c_{1} /\partial u^{k+1}(0,0,0) c_{3}(0,0,0) >0$ $($respectively, $<0)$ hold $(k\in\Z_{\geq 0})$.
If $(dc_{1}/ds)(0,0) \ne 0$, 
then one can assume $c_{1}(0,s)=s$ by a change of the deformation parameter. 
\end{proof}

We remark that normal forms for the cuspidal $S_k^\pm$ singularities
itself is given in \cite{os}.
The uniqueness of the form \eqref{eq:normal} holds 
as in the remark just after Theorem \ref{thm:normals1}. 
If
$(dc_{1}/ds)(0,0) \ne 0$,
then ${f_{{\rm n2}}}^s$ is a generic deformation.
In what follows, we assume $(dc_{1}/ds)(0,0) \ne 0$ and
$c_{1}(0,s)=s$ in ${f_{{\rm n2}}}^s$.
When $k=1$, the form ${f_{{\rm n2}}}^s$ is called the {\it normal form of the deformations of a cuspidal $S_1^\pm$ singularity}.
We see the set of singular points $S_1({f_{{\rm n2}}}^s)$ of ${f_{{\rm n2}}}^s$ is
$$S_1({f_{{\rm n2}}}^s)=\{(u,v)\,|\, v=0\}.$$
We set
$S_2({f_{{\rm n2}}}^s)$
to be the set of singular points that are not cuspidal edge.
Then it holds that
$$S_2({f_{{\rm n2}}}^s)=\{(u,v)\,|\, v=0, c_{1}(u,s)=0\}.$$

\section{Minimal frontalization}
Let $C^{\infty}(2,3)_f$ be the set of $C^{\infty}$-map-germs which to be a frontal.
A map $C^{\infty}(2,3)\to C^{\infty}(2,3)_f$ is called a {\it frontalization}.
See \cite[Definition 3.5]{mno} for an example
of such a map. 
Let $C^{\infty}(2,3)_a$ be the set of $C^{\infty}$-map-germs
satisfying the $2$-jet is $\A$-equivalent to $(u,v^2,0)$ or $(u,v^2,uv)$,
and let $C^{\infty}(2,3)_b$ be the subset of $C^{\infty}(2,3)_a$ which
consists of map-germs to be frontals.
Taking $f\in C^{\infty}(2,3)_a$,
there exists an orientation preserving diffeomorphism-germ $\phi : (\R^2, 0) \to (\R^2,0)$, $T \in SO(3)$
 and functions $f_{21}, f_{31}, f_{33} \in C^\infty(1,1)$ and $f_{32} \in C^\infty(2,1)$ such that  
 \begin{eqnarray}
T \circ f \circ \varphi(u,v)=(u,u^2f_{21}(u)+v^2,
u^2f_{31}(u)+v^2f_{32}(u,v)+vf_{33}(u)).\label{frs0}
\end{eqnarray}
If the $2$-jet of $f$ is $\A$-equivalent to
$(u,v^2,0)$, then $(f_{33})_u(0)=0$ and
the $2$-jet of $f$ is $\A$-equivalent to
$(u,v^2,uv)$, then $(f_{33})_u(0)\ne0$ hold.
Moreover, the uniqueness of the normal form \eqref{frs0} holds.
In fact, substituting $s=0$ into the proof of \cite[Proposition2.5]{shimada},
the proof turns to the proof for the uniqueness of \eqref{frs0}.
By Theorem \ref{thm:fr}
 the right-hand side of  \eqref{frs0} is a frontal if and only if
$f_{33}(u)=0$. Thus, 
the function corresponding to $vf_{33}(u)$ is
called the {\it obstruction for frontality of} $f$.

Then $f$ can be divided as
$$
T \circ f \circ \varphi(u,v)=f_1(u,v)+(0,0,vf_{33}(u)),
$$
and
the map corresponding to $f_1(u,v)$ is
called the {\it frontal part} of $f$.
An obstruction for frontality is studied for a map written in the
form $(u,f_2(u,v),f_3(u,v))$ in \cite{os}. In our case, by
the uniqueness of the normal form \eqref{eq:normals1} 
(\cite[Proposition 2.5]{shimada}),
the obstruction for frontality and the frontal part are
well-defined, and is uniquely
determined from the given $f$.
We define $F(f)$ is the frontal part of $f$.
Then we obtain a map $F:C^{\infty}(2,3)_a\to C^{\infty}(2,3)_b$ and
it holds that $F(f)\in C^{\infty}(2,3)_b$.
The map $F$ is a surjection.
Moreover,  $F|_{C^{\infty}(2,3)_b}$ is the identity map, 
and does not change the frontal part,
we call that $F$ is the {\it minimal frontalization}.
See the center figures of
Figures \ref{fig:defof} and \ref{fig:frontalization} for this frontalization.
In \cite{mond}, a classification of $2$-jet is given.
Table 1 shows the minimal  frontalization of the map-germs satisfying the $2$-jet is $\A$-equivalent to $(u,v^2,0)$ or $(u,v^2,uv)$.
\begin{table}[h]\label{tab:mond}
    \centering
    \begin{tabular}{cccc}
        \hline
        Name &Germ & obstruction for frontality  & frontal part  \\ \hline 
        $S_0$ & $(u,v^2,uv)$ & $uv$ & $(u,v^2,0)$\\ \hline
       $S_k$ & $(u,v^2,v^3\pm u^{k+1}v)$ & $u^{k+1}v$ & $(u,v^2,v^3)$ \\ \hline
        $B_k$& $(u,v^2,u^2v\pm v^{2k+1})$ & $u^2v$& $(u,v^2,\pm v^{2k+1})$ \\ \hline
        $C_k$ &  $(u,v^2,uv^3\pm u^kv)$ & $u^kv$ & $(u,v^2,uv^3)$ \\ \hline
         $F_4$ &  $(u,v^2,u^3v+v^5)$ & $u^3v$ & $(u,v^2,v^5)$ \\ \hline
    \end{tabular}
     \caption{Minimally frontalized germ from Mond's classification}
\end{table}

Let $C^\infty((2,1),3)_a$ be the set of deformations of 
a map in $C^\infty(2,3)_a$ defined in Definition \ref{def:deformeq}}, and let 
$C^\infty((2,1),3)_b$ be the subset of 
$C^\infty((2,1),3)_a$ which consists of deformations
to be frontals for any $s$ and the $2$-jet of $s=0$ case is $\A$-equivalent to $(u,v^2,0)$.
Taking a deformation $f \in C^\infty((2,1),3)_a$,
there exist an orientation preserving diffeomorphism-germ $\phi : (\R^2, 0) \to (\R^2,0)$, $T \in SO(3)$ and functions $f_{21}, f_{31} \in C^\infty(1,1), f_{24}, f_{33}, f_{34} \in C^\infty(2,1), f_{32} \in C^\infty(3,1)$  
 such that \eqref{eq:normals1} holds.
By Theorem \ref{thm:fr}, the right-hand side of \eqref{eq:normals1}
is a frontal
for any $s$ if and only if $f_{33}(u,s)=0$.
Thus the function $vf_{33}(u,s)$ is called the
{\it obstruction for frontality of the deformation}, and
we define {\it frontal part} in the same way as above.
We define $F(f)$ is the frontal part of $f$.
Similarly we obtain a  map 
$F:C^\infty((2,1),3)_a\to C^\infty((2,1),3)_b$ and
by Theorem \ref{thm:fr}, $F(f)\in C^{\infty}((2,1),3)_b$ hold.
Moreover,
the map  $F$ is a surjection and $F|_{C^{\infty}((2,1),3)_b}$ is the identity map,
we call that $F$ is the {\it minimal frontalization of a deformation} as well.
When $s>0$, 
map-germs $f \in C^\infty((2,1),3)_a$ do not have singular point,
it is a frontal.
However, as a deformation,
the surfaces appearing in the family of minimally frontalized deformation
have singular points for any $s$.
Table 2 shows the minimal  frontalization of a deforrmation  of the map-germs satisfying the $2$-jet of $s=0$ case is $\A$-equivalent to $(u,v^2,0)$.

\begin{table}[h]\label{tab:mond}
    \centering
    \begin{tabular}{cccc}
        \hline
        Name &Germ & \shortstack{\\obstruction \\ for frontality} &  frontal part  \\ \hline 
       $S_k$ & $(u,v^2,v^3\pm u^{k+1}v+vs+v^3s)$ & $u^{k+1}v +vs$ & $(u,v^2,v^3+v^3s)$ \\ \hline
        $B_k$& $(u,v^2,u^2v\pm v^{2k+1}+vs+v^3s)$ & $u^2v+vs$& $(u,v^2,\pm v^{2k+1}+v^3s)$ \\ \hline
        $C_k$ &  $(u,v^2,uv^3\pm u^kv+vs+v^3s)$ & $u^kv+vs$ & $(u,v^2,uv^3+v^3s)$ \\ \hline
         $F_4$ &  $(u,v^2,u^3v+v^5v+vs+v^3s)$ & $u^3v+vs$ & $(u,v^2,v^5+v^3s)$ \\ \hline
    \end{tabular}
     \caption{Example of minimally frontalized germs of deformations from Mond's classification}
\end{table}

\begin{example}\label{ex:ex2}
Let $ f : (\R^2 \times \R, 0) \to (\R^3, 0)$ be a deformation of 
$g:(\R^2, 0) \to (\R^3, 0)$ such that
the $2$-jet of $g$ is $\A$-equivalent to $(u,v^2,0)$ defined by
$$f(u,v,s)=(u,u^2+v^2,u^2+v^3s+u^2v^3+v^5+v^7+vs+u^2v).$$
\end{example}
\begin{figure}[htbp]
\centering
\includegraphics[width=60mm]{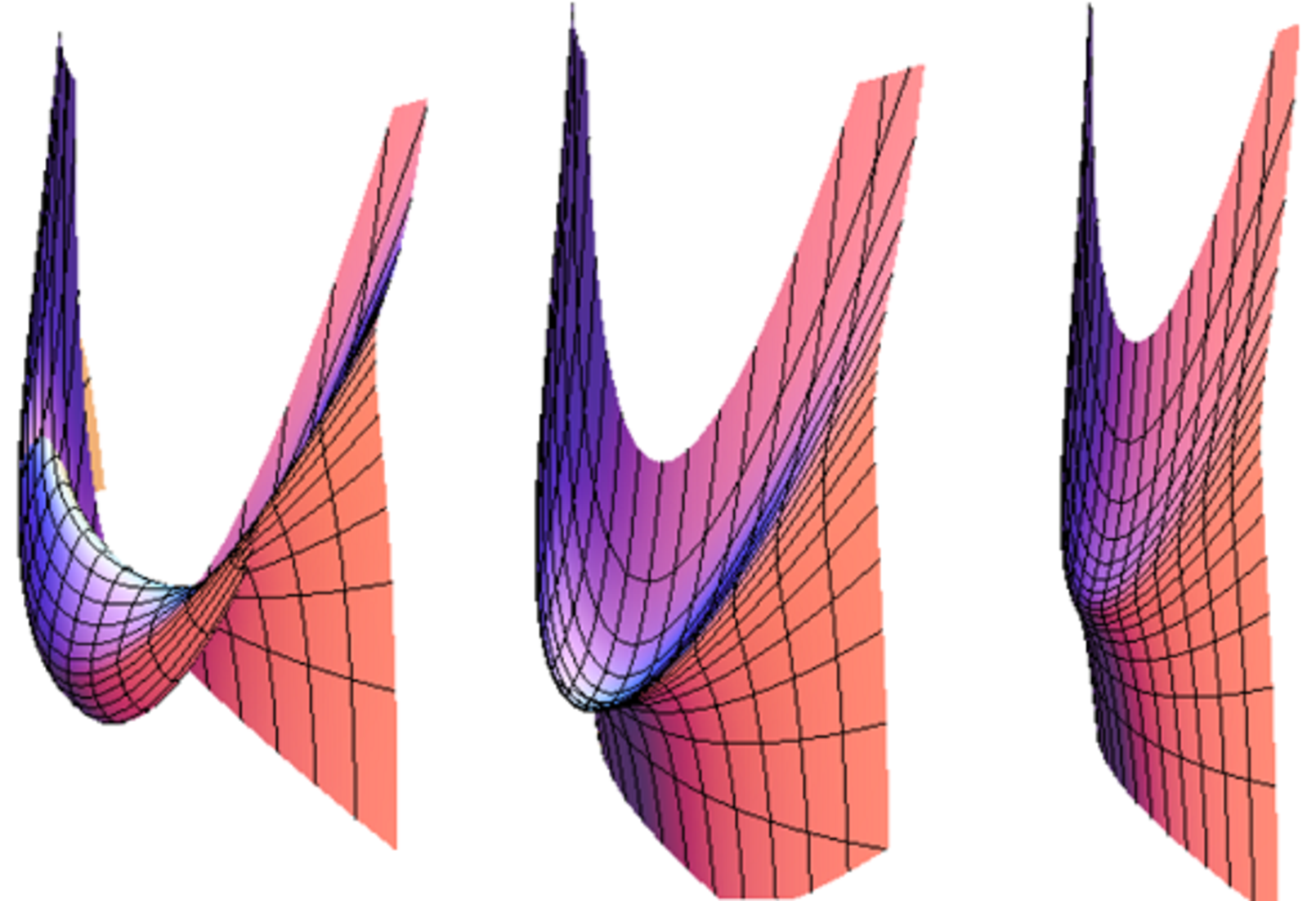}
\caption{The surfaces in Example \ref{ex:ex2} (from left to right $s=-1, 0, 1)$}
\label{fig:defof}
\end{figure}
The obstruction for frontality of $f$ in Example \ref{ex:ex2} is $vs+u^2v$.
Therefore if this term vanishes, it is minimal frontalization of $f$.
\begin{figure}[htbp]
\centering
\includegraphics[width=60mm]{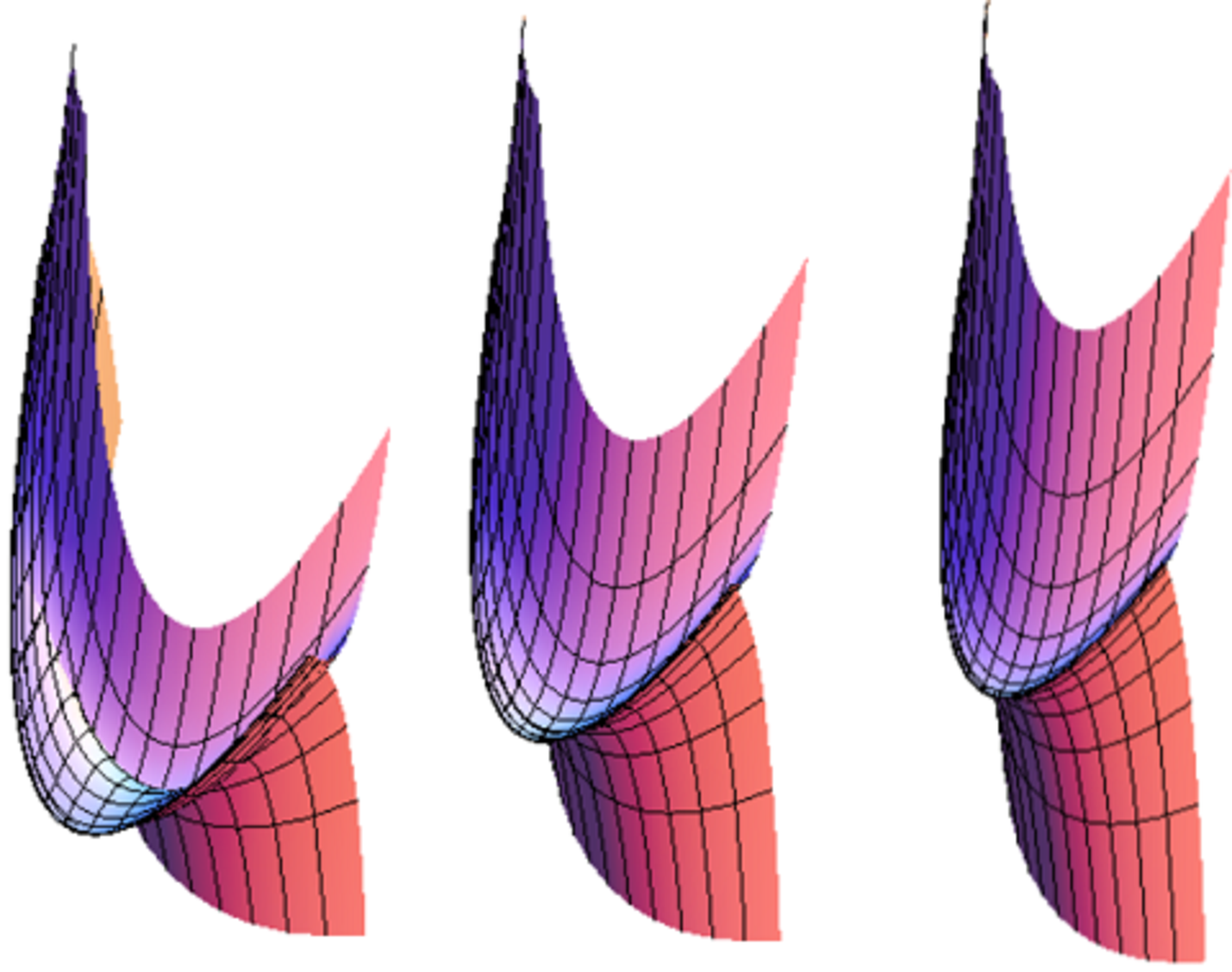}
\caption{Minimal frontalization of  the map in Example \ref{ex:ex2}  (from left to right $s=-1, 0, 1)$}
\label{fig:frontalization}
\end{figure}

\section{Geometry on deformations of cuspidal $S_1$ singularities}
In this section, we consider geometry on the case of $k=1$  in Corollary \ref{cor:normal}.
Firstly, we investigate the location of the singular point, and
we study differential geometric properties of 
deformations of cuspidal $S_1^\pm$ singularities.

We set $f={f_{{\rm n2}}}^s$ (see \eqref{eq:normal}). 
Here, we assume $c_{1}(0,s)=s$ 
(See the remark just after the proof of Corollary \ref{cor:normal}).

\subsection{Description of singular point}

 To obtain the location of the singular point, we set $s=-\tilde{s}^2$,
since $S_2(f)  \neq \emptyset$ is equivalent to $s\le 0$.
If $(u, v)\in S_2(f)$, then $v = 0$ 
and $u$ depends on $\tilde s$.
We set this function $u(\tilde s)$.
Since $c_{1}(0,0)=(c_{1})_u(0,0)=0$ in \eqref{eq:normal}, we set
\begin{eqnarray}
c_{1}(u,s)=s+usd_{1}(s)+u^2d_{2}(s)+u^3d_{3}(s)+u^4d_{4}(u,s).\label{c1}
\end{eqnarray}
Rewriting $f_{32}$ as \eqref{f32} and \eqref{c1},
then we have the following theorem.
\begin{theorem}\label{thm:singcurve}
If $(u,v) \in S_2(f)$ and $d_{2}(0)>0$, then $v=0$ holds, and 
the function $u(\tilde s)$ can be expanded as follows\/{\rm :}
\begin{align}
&u(\tilde s)
=
\dfrac{1}{d_{20}}\tilde{s}+\dfrac{1}{2d_{20}^4}\Bigl(d_{1}(0)d_{20}^2-d_{3}(0)\Bigr)\tilde{s}^2\nonumber\\
&+\dfrac{1}{8d_{20}^7}\Bigl(\bigl(d_{1}^2(0)+4(d_{2})_{s}(0)\bigr)d_{20}^4\label{eq:u}\\
&-2\bigr(3d_{1}(0)d_{3}(0)+2d_{4}(0,0)\bigr)d_{20}^2+5d_{3}^2(0)\Bigr)\tilde s^3+O(4),
\nonumber
\end{align}
where $d_2(0) = d_{20}^2.$
If $u(\tilde s)\ne 0$, $f$ at $(u(\pm\tilde s),0)$ are both the cuspidal cross caps.
\end{theorem}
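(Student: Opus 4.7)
The plan is to reduce the claim to a single scalar equation in $u$ and $\tilde{s}$ and solve it via the implicit function theorem. Since $S_2(f)=\{(u,v)\,|\,v=0,\ c_1(u,s)=0\}$, the assertion $v=0$ is immediate, and it suffices to solve $F(u,\tilde{s}):=c_1(u,-\tilde{s}^2)=0$ for $u$ in terms of $\tilde{s}$. The expansion \eqref{c1} gives $F(u,\tilde{s})=-\tilde{s}^2+d_{20}^2 u^2+O(\tilde{s}^2 u,u^3)$, so the rescaling $u=\tilde{s}w$ followed by division by $\tilde{s}^2$ produces a smooth function $G(w,\tilde{s})=F(\tilde{s}w,\tilde{s})/\tilde{s}^2=-1+d_{20}^2 w^2+O(\tilde{s})$. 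Since $d_{20}>0$, the zero set of $G(\,\cdot\,,0)$ consists of the two simple roots $w=\pm 1/d_{20}$, so by the implicit function theorem each extends to a smooth branch $w=w_\pm(\tilde{s})$. The $\tilde{s}\mapsto -\tilde{s}$ symmetry of $F$ interchanges the two branches, so upon setting $u(\tilde{s}):=\tilde{s}w_+(\tilde{s})$ the second branch becomes $u(-\tilde{s})$; this matches the notation $(u(\pm\tilde{s}),0)$ in the statement.

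To extract the expansion \eqref{eq:u} I would substitute the ansatz $u(\tilde{s})=a_1\tilde{s}+a_2\tilde{s}^2+a_3\tilde{s}^3+O(\tilde{s}^4)$ into $F(u(\tilde{s}),\tilde{s})=0$ and match coefficients of $\tilde{s}^n$ order by order. The $\tilde{s}^2$-coefficient is $-1+a_1^2 d_{20}^2$, which forces $a_1=1/d_{20}$ after selecting the positive branch; the $\tilde{s}^3$-coefficient is linear in $a_2$ with nonzero leading coefficient $2a_1 d_{20}^2$ and yields $a_2=(d_1(0)d_{20}^2-d_3(0))/(2d_{20}^4)$; the $\tilde{s}^4$-coefficient is likewise linear in $a_3$ with the same leading coefficient, and substituting the known values of $a_1,a_2$ and simplifying produces the displayed expression for $a_3$. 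The main obstacle is purely combinatorial: the $\tilde{s}^4$-coefficient aggregates contributions from $u^2 d_{20}^2$ (the term $2a_1 a_3+a_2^2$), from $u^2\cdot(-\tilde{s}^2(d_2)_s(0))$, from $u^3 d_3(0)$ (the term $3a_1^2 a_2$), from $u^4 d_4(0,0)$, and from $-u\tilde{s}^2 d_1(0)$ (the term $a_2$), and these must be collected carefully to cast the answer in the compact form of \eqref{eq:u}.

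For the last assertion, at each of the points $(u(\pm\tilde{s}),0)$ with $u(\tilde{s})\ne 0$, I would apply the $k=0$ case of the criterion of \cite[Theorem 3.2]{saji} already invoked in Corollary \ref{cor:normal}. After a translation of the source sending the chosen point to the origin (which preserves the shape of the normal form \eqref{eq:normal} up to relabelling of the coefficient functions), that criterion says a non-cuspidal-edge singular point is a cuspidal cross cap precisely when $(c_1)_u$ and $c_3$ are both nonzero at the point. Differentiating \eqref{c1} gives $(c_1)_u(u(\tilde{s}),-\tilde{s}^2)=2d_{20}^2 u(\tilde{s})+O(u^2)+O(\tilde{s}^2)=2d_{20}\tilde{s}+O(\tilde{s}^2)$, which is nonzero for small $\tilde{s}\ne 0$, while $c_3$ at the shifted origin equals $c_3(0,0,0)+O(\tilde{s})$, nonzero by the cuspidal $S_1^\pm$ hypothesis together with Corollary \ref{cor:normal}. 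Hence both $(u(\pm\tilde{s}),0)$ are cuspidal cross caps.
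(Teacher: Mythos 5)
Your proposal is correct, and its core — substituting the ansatz $u(\tilde s)=a_1\tilde s+a_2\tilde s^2+a_3\tilde s^3+O(\tilde s^4)$ into $c_1(u,-\tilde s^2)=0$ and matching coefficients — is the same computation the paper performs (the paper phrases it as differentiating $c_1(\alpha_1\tilde s+\alpha_2\tilde s^2+\alpha_3\tilde s^3+\alpha_4(\tilde s)\tilde s^4,-\tilde s^2)=0$ three times, which is equivalent). You add two things the paper does not spell out. First, the blow-up $u=\tilde s w$, $G(w,\tilde s)=F(\tilde s w,\tilde s)/\tilde s^2$ together with the implicit function theorem to justify that a smooth branch $u(\tilde s)$ of the prescribed form exists at all and that the two roots are exchanged by $\tilde s\mapsto-\tilde s$; the paper simply posits the power-series form of $u$, so your argument is a genuine (and welcome) strengthening of the existence step. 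Second, for the final assertion you invoke the algebraic criterion of \cite[Theorem 3.2]{saji} with $k=0$ (cuspidal cross cap iff $(c_1)_u\cdot c_3\neq0$ at a point of $S_2$), whereas the paper uses the geometric criterion of \cite[Theorem 1.4]{fsuy} after observing $\operatorname{Ker}df=\langle\partial_v\rangle$ along $v=0$; both are legitimate, and your computation $(c_1)_u(u(\tilde s),-\tilde s^2)=2d_{20}\tilde s+O(\tilde s^2)\neq0$ for small $\tilde s\neq0$ correctly supplies the needed nonvanishing. The only caveat is that you do not actually carry out the $\tilde s^4$-coefficient computation for $a_3$, but you have correctly enumerated every contributing term, so completing it is routine.
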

\begin{proof}
Let us set
$$
u=\alpha_1 \tilde s+\alpha_2 \tilde s^2+\alpha_3 \tilde s^3+\alpha_4(\tilde s)\tilde s^4.
$$
Differentiating $c_{1}(\alpha_1 \tilde s+\alpha_2 \tilde s^2+\alpha_3 \tilde s^3+\alpha_4(\tilde s)\tilde s^4,-\tilde s^2) =0$ three times, we have
$$
\alpha_1=\dfrac{1}{d_{20}},\quad \alpha_2=\dfrac{1}{2d_{20}^4}\Bigl(d_{1}(0)d_{20}^2-d_{3}(0)\Bigr),\\
$$

$\alpha_3=\dfrac{1}{8d_{20}^7}\Bigl(\bigl(d_{1}^2(0)+4(d_{2})_{s}(0)\bigr)d_{20}^4
-2\bigr(3d_{3}(0)d_{1}(0)+2d_{4}(0,0)\bigr)d_{20}^2+5d_{3}^2(0)\Bigr).$

Since 
$\operatorname{Ker}df= \langle \partial_v\rangle$
at any $(u(\tilde s),0)$,
by  \cite[Theorem 1.4]{fsuy}, we obtain the assertion that
$f$ at $(u(\pm \tilde s),0)$ are both the cuspidal cross caps.
\end{proof}

If $d_{2}(0)<0$, the same calculation can be done by setting $d_{2}(0) = -d_{20}^2$,
and we obtain the same results.

\subsection{Self-intersection curves}
In this section we focus on self-intersection curves of deformations of cuspidal $S_1$ singularities.
In particular, deformations of the geodesic curvature and the normal curvature of self-intersection curves are considered.

The self-intersection curves are determined by $f(u_1,v_1)=f(u_2,v_2)$ in general.
In our case,
by looking the first component of ${f_{{\rm n2}}}^s$, we see $u_1=u_2$, we rewrite as $u$.
Focusing on the second component, $v_1^2=v_2^2$ holds.
If we set $v=v_1=-v_2$, then
the self-intersection curves are determined by
$i(u,v,s)=c_{1}(u,s)+v^2c_{3}(u,v^2,s)=0$ from the third component.
Since $i_u(u,0,s)=sd_1(s)+2ud_2(s)+3u^2d_3(s)+4u^3d_4(u,s)+u^4(d_4)_u(u,s)$ and
by the assumption $d_2(0)>0$, it holds that $i_u(u,0,s)\ne0$ for
small $s\ne0$ and $u\ne0$.
By the implicit function theorem, there exists a function $u(v,s)$ such that $i(u(v,s),v,s)=c_{1}(u(v,s),s)+v^2c_{3}(u(v,s),v^2,s)=0$ holds
for any $v$ and $s\ne0$.

\begin{theorem}
Let  $\kappa_g(v,\tilde s)$ be the geodesic curvature and $\kappa_n(v,\tilde s)$ be the normal curvature of self-intersection curves $v\mapsto f(u(v,\tilde s),v,-\tilde s^2)$.
Then these curvatures can be
expanded as follows
\begin{eqnarray}
\kappa_g(0,\tilde s)&=&
\pm\dfrac{(-2f_{21}(0)c_3(0,0,0)+(c_{1})_{uu}(0,0))}{c_3(0,0,0)}+O_{\tilde s}(1)
\label{kgbunshi}\\
\kappa_n(0,\tilde s)&=&
2f_{31}(0)+O_{\tilde s}(1)\label{knbunshi}
\end{eqnarray}
In particular, both of the geodesic curvature and 
the normal curvature are bounded for fixed $\tilde s\ne0$.
\end{theorem}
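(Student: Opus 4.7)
The plan is to reparametrize the self-intersection curve by a regular parameter near the cross cap and then evaluate the standard formulas $\kappa_n = \langle \gamma'', \nu\rangle/|\gamma'|^2$ and $\kappa_g = (\gamma'\times\gamma'')\cdot\nu/|\gamma'|^3$ for the resulting image curve. First I will deal with the singular $v$-parametrization. Implicit differentiation of the defining equation $c_1(u,s) + v^2 c_3(u, v^2, s) = 0$ (with $s = -\tilde s^2$) yields $u_v(0, \tilde s) = 0$, and $f_v$ vanishes on $\{v = 0\}$, so $v \mapsto f(u(v, \tilde s), v, -\tilde s^2)$ degenerates at $v = 0$. However, substituting the defining equation into the third component of $f$ eliminates the odd-in-$v$ contribution $v^3(c_1 + v^2 c_3)$ to $v^2 f_{32}$, showing the image is even in $v$ and hence smooth in $t = v^2$. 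Setting $u_0 := u(\tilde s) = \tilde s/d_{20} + O(\tilde s^2)$ from Theorem \ref{thm:singcurve} and using $(c_1)_u(u_0, -\tilde s^2) = 2 d_{20} \tilde s + O(\tilde s^2)$, one has $\tilde u_t(0, -\tilde s^2) \ne 0$ for $\tilde s \ne 0$, so I further reparametrize by $x := \tilde u(t, -\tilde s^2)$, obtaining a regular parametrization $\gamma(x) = (x, y(x, s), z(x, s))$ near $x = u_0$ where $y$ and $z$ are smooth via the implicit function $t(x, s)$ solving $c_1(x, s) + t c_3(x, t, s) = 0$.

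Next I will compute the unit normal at the cross cap. Since $f$ is frontal by Corollary \ref{cor:normal}, $f_v = v\,\tilde f_v$ with $\tilde f_v(u, 0, s) = (0, 2, 2 c_0(u, s))$, and $\nu = (f_u \times \tilde f_v)/|f_u \times \tilde f_v|$ is the smooth unit normal along $f$. Expanding at $(u_0, 0, -\tilde s^2)$ and using $c_0(0,0) = 0$ yields $\nu = (O(\tilde s), O(\tilde s), 1) + O(\tilde s^2)$; in particular $\nu_3 \to 1$ as $\tilde s \to 0$.

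Then I will differentiate $\gamma$ at $x = u_0$. Implicit differentiation gives $t_x(u_0, s) = -(c_1)_u(u_0, s)/c_3(u_0, 0, s) = O(\tilde s)$ and, from a second differentiation, $t_{xx}(u_0, s) = -(c_1)_{uu}(0, 0)/c_3(0, 0, 0) + O(\tilde s)$. Substituting into $y = x^2 f_{21}(x) + t + x s f_{24}$ and $z = x^2 f_{31}(x) + t c_0(x, s) + t^2 c_2 + x s f_{34}$ I obtain $\gamma'(u_0) = (1, O(\tilde s), O(\tilde s))$ and $\gamma''(u_0) = (0, y_{xx}, z_{xx})$ with $y_{xx}(u_0, s) = 2 f_{21}(0) + t_{xx}(u_0, s) + O(\tilde s) = (2 f_{21}(0) c_3(0, 0, 0) - (c_1)_{uu}(0, 0))/c_3(0, 0, 0) + O(\tilde s)$ and $z_{xx}(u_0, s) = 2 f_{31}(0) + O(\tilde s)$. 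Now $|\gamma'(u_0)|^2 = 1 + O(\tilde s^2)$ and $\gamma' \times \gamma'' = (y_x z_{xx} - z_x y_{xx}, -z_{xx}, y_{xx})$, so the dominant contributions are $\langle \gamma'', \nu \rangle \sim z_{xx} \nu_3 = 2 f_{31}(0) + O(\tilde s)$ and $(\gamma' \times \gamma'') \cdot \nu \sim y_{xx} \nu_3$, producing the stated expansions. The $\pm$ in $\kappa_g$ reflects the orientation ambiguity of the self-intersection (the $v$-parametrization covers its image twice via $v \mapsto -v$), while boundedness at fixed $\tilde s \ne 0$ is immediate from the formulas.

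The main technical obstacle is organizing the asymptotic expansions in $\tilde s$: the derivatives $t_x, t_{xx}$ of the implicit function have different orders ($t_x = O(\tilde s)$ whereas $t_{xx} = O(1)$), and it is precisely the order-$1$ contribution of $t_{xx}$ through $-(c_1)_{uu}(0, 0)/c_3(0, 0, 0)$ that produces the nontrivial finite limit for $\kappa_g$ rather than the trivial $2 f_{21}(0)$, with all other potentially divergent contributions getting absorbed into higher-order $\tilde s$-corrections.
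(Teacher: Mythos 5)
Your argument is correct, and it reaches the stated expansions by a genuinely different route from the paper. The paper keeps the singular parametrization $v\mapsto \hat c(v)=f(u(v,\tilde s),v,-\tilde s^2)$, for which $\hat c_v$ vanishes at $v=0$, and therefore needs the auxiliary limit formulas of Proposition \ref{prop:kgkn} expressing $\kappa_g$ and $\kappa_n$ of an even curve through $\hat c_{vv}$ and $\hat c_{vvvv}$; this forces the computation of $u_{vv}$, $u_{vvvv}$ and fourth-order derivatives of ${f_{{\rm n2}}}^s$, with the divergent factors $(c_1)_u^{-3}$ cancelling between numerator and denominator. You instead exploit the same evenness (including the key cancellation $v^3(c_1+v^2c_3)=0$ on the self-intersection locus, which the paper uses implicitly) to pass to $t=v^2$ and then to the graph parameter $x=u$, which is regular precisely because $(c_1)_u(u(\tilde s),-\tilde s^2)=2d_{20}\tilde s+O(\tilde s^2)\ne0$ for $\tilde s\ne0$; the standard formulas for a regular curve then apply, only second derivatives are needed, and the nontrivial term $-(c_1)_{uu}(0,0)/c_3(0,0,0)$ enters transparently through $t_{xx}$. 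The trade-offs: your reparametrization is more elementary and makes the boundedness claim immediate, but it degenerates at $\tilde s=0$ and only covers the half-branch $t\ge0$, which is harmless here since the statement concerns fixed $\tilde s\ne0$; the paper's route is heavier but its Proposition \ref{prop:kgkn} is set up once and reused for the $\tilde s=0$ analysis of the next theorem. Your identification of the sign ambiguity $\pm$ with the orientation reversal of the doubly-covering $v$-parametrization agrees with the role of $\ep$ in Proposition \ref{prop:kgkn}.
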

\begin{proof}
We set that $\hat c(u(v,s),v,s)={f_{{\rm n2}}}^s(u(v,s),v,s).$
By  calculating the partial derivatives of $i(u(v,s),v,s)=0$, we have
\begin{eqnarray}
u_v&=&u_{vvv}=0,\label{uv}\\
u_{vv}&=&-\dfrac{2c_{3}}{(c_{1})_{u}},\label{uvv}\\
u_{vvvv}&=&-\dfrac{12}{(c_{1})_{u}^3}  \Bigl(2 (c_{1})_{u}^2 (c_{3})_{v}
-2 (c_{1})_{u}(c_{3})_{u}{c_{3}}
+(c_{1})_{uu}c_{3}^2\Bigr),\label{uvvvv}
\end{eqnarray}
where the functions are evaluated at $v=0$ and $s=-\tilde s^2$.
By a calculation of $\hat c$ using \eqref{uv}, we have
\begin{eqnarray}
\hat c_{vv}&=&{(f_{{\rm n2}}}^s)_{vv}
+u_{vv}{(f_{{\rm n2}}}^s)_u,\label{cvv}\\
\hat c_{vvvv}&=&{(f_{{\rm n2}}}^s)_{vvvv}
+u_{vvvv}{(f_{{\rm n2}}}^s)_u
+6u_{vv}{(f_{{\rm n2}}}^s)_{uvv}\label{cv4}
+3u_{vv}^2{(f_{{\rm n2}}}^s)_{uu},
\end{eqnarray}
where the functions are evaluated at $v=0$ and $s=-\tilde s^2$.
Here, $({f_{{\rm n2}}}^s)_{vv}(u,0,s)=(0,2,2c_0(u,s))$ and 
$({f_{{\rm n2}}}^s)_{vvvv}(u,0,s)=(0,0,24c_2(u,s))$.

Self-intersection curves are even functions with respect to $v$.
Thus the geodesic curvature $\lim_{v\to0} \kappa_g(v,\tilde s)$ and the normal curvature $\lim_{v\to0}\kappa_n(v,\tilde s)$ for fixed $\tilde s\ne0$ are calculated as in Proposition \ref{prop:kgkn}. By a direct calculation, we have
\begin{eqnarray}
\nu(u,0,s)&=&
(-4u f_{31}(u) + 4 u f_{21}(u)c_{0}(u, s) + 2 s f_{24}(u, s) c_{0}(u, s) - 2 s f_{34}(u, s)\nonumber\\
& &+ 2 u^2 c_{0}(u, s) f_{21}'(u) 
- 2 u^2 f_{31}'(u) + 2usc_{0}(u, s)(f_{24})_u(u,s) -2us(f_{34})_u(u,s),\nonumber\\
& &-2c_{0}(u,s),2)/\delta\nonumber \\
&=&
(0,0,1)+O_{\tilde s}(1),\label{nu}
\end{eqnarray}
and $'=d/du$ and $\delta$ stands for the length of the numerator of it.
Using \eqref{uvv}, \eqref{uvvvv}, \eqref{cvv}, \eqref{cv4} and \eqref{nu}, we have
\begin{eqnarray}
& &\det(\hat c_{vv}, \hat c_{vvvv}, \nu)\Big\vert_{v=0}\nonumber\\
&=&\det\Bigl({(f_{{\rm n2}}}^s)_{vv}+u_{vv}{(f_{{\rm n2}}}^s)_u,{(f_{{\rm n2}}}^s)_{vvvv}
+u_{vvvv}{(f_{{\rm n2}}}^s)_u\nonumber\\
& &\hspace{60mm}+6u_{vv}{(f_{{\rm n2}}}^s)_{uvv}\label{cvvvv}
+3u_{vv}^2{(f_{{\rm n2}}}^s)_{uu},\nu\Bigr)\Big\vert_{v=0}\nonumber\\
&=&\dfrac{1}{(c_{1})_{u}^3}\Bigl(\det\bigl({(f_{{\rm n2}}}^s)_{vv},-12A{(f_{{\rm n2}}}^s)_{u}+B(c_1)_u,\nu\bigr)\nonumber\\
& &\hspace{60mm}-2c_3\det\bigl({(f_{{\rm n2}}}^s)_{u},12Cc_3^2({f_{{\rm n2}}}^s)_{uu},(c_1)_u ,\nu\bigr)\Bigr)\Big\vert_{v=0}\nonumber\\
&=&\dfrac{24c_3(0,0,0)^2}{(c_{1})_u(0,0)^3}\Bigl(-2f_{21}(0)c_3(0,0,0)+(c_{1})_{uu}(0,0)^2\Bigr)+O_{\tilde s}(1),\label{kgbunshi}\\
& &|\hat c_{vv}|\Big\vert_{v=0}\nonumber\\
&=&\dfrac{1}{|(c_{1})_u|}|(c_{1})_u{(f_{{\rm n2}}}^s)_{vv}-2c_3{(f_{{\rm n2}}}^s)_{u}|\Big\vert_{v=0}\nonumber\\
&=&\left|\dfrac{2c_{3}(0,0,0)}{(c_{1})_u(0,0)}\right|+ O_{\tilde s}(1),\label{kgbunbo}\\
& &\hat c_{vvvv}\cdot\nu\Big\vert_{v=0}\nonumber\\
 &=&\dfrac{1}{(c_1)_u}\Bigl({(f_{{\rm n2}}}^s)_{vvvv}(c_1)_{u}^2-12(c_1)_{u}c_3{(f_{{\rm n2}}}^s)_{uvv}+12c_3^2{(f_{{\rm n2}}}^s)_{uu}\Bigr)\cdot \nu\Big\vert_{v=0}\nonumber\\
&=& \dfrac{48f_{31}(0)c_{3}(0,0,0)^2}{(c_{1})_u(0,0)^2}+O_{\tilde s}(1),\label{knbunshi}
      \end{eqnarray}
     where $A, B, C$ are some functions which do not used in the later calculations.
Here, $O_{\tilde s}(n)$ stands for the terms whose degrees are equal to or greater than $n$ for $\tilde s$.
By Proposition \ref{prop:kgkn}, we obtain the assertion.
\end{proof}

When $s=0$, only cuspidal $S_1^-$ singularity has self-intersection curves.
Therefore, we assume $(c_1)_{uu}(0,0)c_3(0,0,0)<0$.

\begin{theorem}
If $(c_1)_{uu}(0,0)c_3(0,0,0)<0$, then 
the geodesic curvature $\kappa_g$ and the normal curvature 
$\kappa_\nu$ of
the each branch of self-intersection curves
are bounded and they
are 
$$
\kappa_g=
\dfrac{2f_{21}(0)c_{3}(0,0,0)-(c_{1})_{uu}(0,0)}{c_{3}(0,0,0)},\quad
\kappa_\nu=
2f_{31}(0).
$$
Furthermore, these coincide with the absolute value of the limits of
the geodesic curvature and the limits of the normal curvature of
the self-intersection curves on the case of $\tilde s\ne0$
respectively.
\end{theorem}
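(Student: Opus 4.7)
The plan is to set $s=0$ in $f={f_{{\rm n2}}}^s$ and analyze the zero set of $i(u,v,0)=c_{1}(u,0)+v^2 c_{3}(u,v^2,0)$ directly, since the hypothesis $(c_{1})_{uu}(0,0)c_{3}(0,0,0)<0$ should force two smooth transverse branches meeting at the origin. First I would use the $k=1$ condition of Corollary \ref{cor:normal}, namely $(c_{1})_{u}(0,0)=0$, together with expansion \eqref{c1}, to write $c_{1}(u,0)=d_{2}(0)u^2+O(u^3)$ so that
\begin{equation*}
i(u,v,0)=d_{2}(0)u^2+c_{3}(0,0,0)v^2+(\text{higher order terms}).
\end{equation*}
Setting $\beta^2=-c_{3}(0,0,0)/d_{2}(0)>0$ and introducing $w=u/v$, division by $v^2$ and the implicit function theorem applied at $(w,v)=(\pm\beta,0)$ yield two smooth branches $u_{\pm}(v)=\pm\beta v+O(v^2)$.

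Next, for each branch I would parametrize the self-intersection curve as $c_{\pm}(v)=f(u_{\pm}(v),v,0)$ and compute its first two derivatives by the chain rule. Since $f_{u}(0,0,0)=(1,0,0)$ and $f_{v}(0,0,0)=0$, the tangent $c_{\pm}'(0)=\pm\beta(1,0,0)$ is nonzero, so the parametrization is regular at $v=0$ and the standard curvature formulas apply. Using
\begin{equation*}
f_{vv}(0,0,0)=(0,2,0),\quad f_{uv}(0,0,0)=0,\quad f_{uu}(0,0,0)=(0,2f_{21}(0),2f_{31}(0)),
\end{equation*}
I would compute the relevant components of $c_{\pm}''(0)$; the unit normal is $\nu(0,0,0)=(0,0,1)$ by specializing \eqref{nu} to $(u,s)=(0,0)$.

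Finally, I would apply the formulas $\kappa_{n}=(c''\cdot\nu)/|c'|^2$ and $\kappa_{g}=\det(c',c'',\nu)/|c'|^3$ from Proposition \ref{prop:kgkn}. This yields $\kappa_{n}=2f_{31}(0)$ and, after substituting $\beta^2=-2c_{3}(0,0,0)/(c_{1})_{uu}(0,0)$, the value $\kappa_{g}=\pm(2f_{21}(0)c_{3}(0,0,0)-(c_{1})_{uu}(0,0))/c_{3}(0,0,0)$, the sign being determined by the branch. Comparison with the limit formulas from the preceding theorem then shows that these bounded numbers coincide in absolute value with $\lim_{\tilde s\to 0}\kappa_{g}(0,\tilde s)$ and $\lim_{\tilde s\to 0}\kappa_{n}(0,\tilde s)$. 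The main obstacle I anticipate is justifying smoothness of the two branches at the origin: the substitution $w=u/v$ is singular at $v=0$, so I need to verify that the extension of $w(v)$ across $v=0$ is smooth, which reduces to a Morse-type splitting of the leading quadratic form $d_{2}(0)u^2+c_{3}(0,0,0)v^2$ into two distinct linear factors; keeping the sign conventions straight across the two branches is the remaining bookkeeping issue.
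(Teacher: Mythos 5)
Your proposal is correct and follows essentially the same route as the paper: split the indefinite quadratic leading part $d_{2}(0)u^{2}+c_{3}(0,0,0)v^{2}$ of $i(\cdot,\cdot,0)$ into two smooth transverse branches, parametrize each branch, apply the standard curvature formulas at the origin, and compare with the $\tilde s\ne 0$ limits of the preceding theorem. The only differences are cosmetic: the paper writes each branch as a graph $v(u)$ with $(v')^{2}=-(c_{1})_{uu}/(2c_{3})$ while you write $u_{\pm}(v)=\pm\beta v+O(v^{2})$ via the blow-up $w=u/v$ (your justification of branch smoothness is a bit more explicit than the paper's one-line assertion), and the formulas $\kappa_{g}=\det(c',c'',\nu)/|c'|^{3}$, $\kappa_{n}=(c''\cdot\nu)/|c'|^{2}$ you invoke are the ordinary ones for a regular curve (which is also what the paper uses), not those of Proposition \ref{prop:kgkn}, which concerns the even, singular parametrization in the $\tilde s\ne0$ case.
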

\begin{proof}
By \eqref{kgbunshi} and \eqref{kgbunbo}, limits of the geodesic curvature is
$$\lim_{\tilde s\to0}\lim_{v\to0}\kappa_g(v,\tilde s)=\dfrac{2f_{21}(0)c_{3}(0,0,0)-(c_{1})_{uu}(0,0)}{c_{3}(0,0,0)},$$
and using \eqref{kgbunbo} and \eqref{knbunshi}, limits of the normal curvature is
$$\lim_{\tilde s\to0}\lim_{v\to0}\kappa_n(v,\tilde s)=2f_{31}(0).$$
Since the self-intersection curves of ${f_{{\rm n2}}}^s(u,v,0)$
are determined by $i(u,v,s)=c_{1}(u,s)+v^2c_{3}(u,v^2,s)=0$ with $s=0$,
since $c_3(0)\ne0$, each branch of curves of
$i^{-1}(0)$ is not tangent to $\partial_v$.
Therefore there exists a function $v(u)$ such that $i(u,v(u))=0$.
By  calculating the partial derivatives of $i(u,v(u))=0$, we have
$$(v')^2=-\dfrac{(c_1)_{uu}}{2c_3}.$$
We set $c(u,v(u))={f_{{\rm n2}}}^s(u,v(u))$. Then the geodesic curvature is
\begin{eqnarray}
\kappa_g(0)=\dfrac{|c',c'',\nu|}{|c'|^3}&=&
\dfrac{\det(({f_{{\rm n2}}}^s)_{u},({f_{{\rm n2}}}^s)_{uu}+({f_{{\rm n2}}}^s)_{vv}(v')^2,\nu)}{|({f_{{\rm n2}}}^s)_{u}|^3}\nonumber\\
&=&
\dfrac{2f_{21}(0)c_{3}(0,0,0)-(c_{1})_{uu}(0,0)}{c_{3}(0,0,0)},\nonumber
\end{eqnarray}
and the normal curvature is
$$\kappa_n(0)=\dfrac{c''\cdot\nu}{|c'|^2}=(({f_{{\rm n2}}}^s)_{uu}+({f_{{\rm n2}}}^s)_{vv}(v')^2)\cdot\nu=2f_{31}(0)$$
at the origin, where $'=d/du$.
These show the assertion.
\end{proof}

\subsection{Geometric invariants of deformation}
In \cite{hs}, the bias and the secondary cuspidal curvature are defined for non-degenerate singular point which is not a cuspidal edge.
The bias measures the bias of
a curve around the singular point, and the secondary cuspidal curvature
measures sharpness of $5/2$-cusp.
We calculate these invariants for deformations of cuspidal $S_1$ singularities.

\begin{definition}\cite[Definition 3.7]{hs}
Let $f:(\R^2,0)\to(\R^3,0)$ be a frontal satisfying $j^2f(0)=(u,v^2,0)$
such that $f$ at $v=0$ is not a cuspidal edge.
We take a coordinate system $(u,v)$ satisfying $S(f)=\{v=0\}$.
Then there exists a  vector field $\tilde \eta|_{\{v=0\}}$ such that
\begin{eqnarray}
\tilde\eta f(u,0)=0,\quad\langle f_u,\tilde \eta^2f\rangle(u,0)=\langle f_u,\tilde \eta^3f\rangle(u,0)=0.\label{null}
\end{eqnarray}
Then there exists $\ell$ such that $\tilde\eta^3f(0,0)=\ell\tilde\eta^2f(0,0)$.
Using this vector field $\tilde \eta$ and $\ell$ the
{\it bias} $r_b$ and the {\it secondary cuspidal curvature} $r_c$ are defined by
$$
\left. r_b=\dfrac{|f_u|^2\det(f_u,\tilde\eta^2f, \tilde \eta^4f)}{|f_u\times\tilde\eta^2f|^3}\right|_{(u,v)=(0,0)},
$$

$$
\left. r_c=\dfrac{|f_u|^{5/2}\det(f_u,\tilde\eta^2f,3\tilde\eta^5f-10\ell\tilde \eta^4f)}{|f_u\times\tilde\eta^2f|^{7/2}}\right|_{(u,v)=(0,0)}.
$$
\end{definition}

\begin{theorem}
Let $f:(\R^2,0)\to(\R^3,0)$ be a deformation of cuspidal $S_1$ singularities.
Then the bias $r_b$ and the secondary cuspidal curvature $r_c$ of $f$ at $(u(\pm\tilde s), 0, -\tilde s^2)$ can be expanded as  a function of $\tilde s$ as follows:
$$r_b=6c_{2}(0,0,0)+\dfrac{6(-2f_{21}(0)(c_{0})_u(0,0)+(c_{2})_u(0,0,0))}{d_{20}}\tilde s+O_{\tilde s}(2),$$
$$r_c=45\sqrt{2}c_{3}(0,0,0)+\dfrac{45\sqrt{2}(c_{3})_u(0,0,0)}{d_{20}}\tilde s+O_{\tilde s}(2).$$
\end{theorem}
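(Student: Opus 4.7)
The plan is to evaluate the definitions of $r_b$ and $r_c$ at the cuspidal cross cap point $p_0=(u(\tilde s),0,-\tilde s^2)$ supplied by Theorem \ref{thm:singcurve} and then expand the result as a Taylor series in $\tilde s$. Since $c_1(u(\tilde s),-\tilde s^2)\equiv 0$ by construction of $u(\tilde s)$, the map-germ at $p_0$ is a cuspidal cross cap, so the hypotheses of the definition of $r_b,r_c$ are satisfied there, and $f$ in a neighborhood of $p_0$ remains in the normal form \eqref{eq:normal}.

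To carry out the computation, I would first construct a null vector field $\tilde\eta$ satisfying \eqref{null}. Because $\operatorname{Ker}df|_{v=0}=\langle\partial_v\rangle$ along the whole singular curve, set
\[
\tilde\eta=a(u,v)\partial_u+\partial_v,\qquad a(u,0)=0,
\]
and expand $a(u,v)=v\,a_1(u)+v^2 a_2(u)+\cdots$ recursively. Computing $\tilde\eta^2 f\big|_{v=0}=a_1f_u+f_{vv}|_{v=0}$ and imposing $\langle f_u,\tilde\eta^2 f\rangle|_{v=0}=0$ yields $a_1=-\langle f_u,f_{vv}\rangle/|f_u|^2$ at $v=0$; the second condition $\langle f_u,\tilde\eta^3 f\rangle|_{v=0}=0$ determines $a_2$ analogously. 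The higher coefficients $a_3,a_4,\dots$ enter $\tilde\eta^4 f$ and $\tilde\eta^5 f$ only through terms proportional to $f_u$, which drop out of every determinant $\det(f_u,\tilde\eta^2 f,\,\cdot\,)$ in $r_b$ and $r_c$; hence the invariants depend only on $a_1,a_2$ and the partial derivatives of $f$ up to order five in $v$ at $p_0$, all of which are read off directly from \eqref{eq:normal}, \eqref{f32} and \eqref{c1}.

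One then extracts $\ell$ from $\tilde\eta^3 f(p_0)=\ell\,\tilde\eta^2 f(p_0)$ and substitutes into the formulas. At $\tilde s=0$ the ingredients collapse to $f_u=(1,0,0)$, $f_{vv}=(0,2,2c_0(0,0))=(0,2,0)$, $f_{vvv}=(0,0,6c_1(0,0))=(0,0,0)$, $f_{vvvv}=(0,0,24c_2(0,0,0))$ and $f_{vvvvv}=(0,0,120c_3(0,0,0))$, and these values immediately reproduce the constant terms $6c_2(0,0,0)$ and $45\sqrt{2}\,c_3(0,0,0)$. The linear-in-$\tilde s$ coefficients arise from $u$-differentiating these quantities and substituting $u=u(\tilde s)=\tilde s/d_{20}+O(\tilde s^2)$ from \eqref{eq:u}; contributions from $s$-differentiation at $s=-\tilde s^2$ are of order $\tilde s^2$ and are absorbed into the $O_{\tilde s}(2)$ remainder. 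The principal obstacle is bookkeeping: the determinants contain many terms whose cancellation depends on the identities $c_1(u(\tilde s),-\tilde s^2)\equiv 0$, $f_v(u,0,s)\equiv 0$ and $c_0(0,0)=0$, so organizing the expansion to retain only the two leading $\tilde s$-coefficients is the step requiring real care.
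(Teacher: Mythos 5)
Your proposal follows essentially the same route as the paper's proof: you construct the same null vector field $\tilde\eta=(va_1+v^2a_2)\partial_u+\partial_v$ with $a_1=-\langle f_u,f_{vv}\rangle/|f_u|^2$ and $a_2$ determined by $\langle f_u,\tilde\eta^3f\rangle=0$, extract $\ell$ from $\tilde\eta^3f=\ell\,\tilde\eta^2f$, read off the iterated $v$-derivatives from the normal form, and expand along $u(\tilde s)=\tilde s/d_{20}+O(\tilde s^2)$ while discarding the $O(\tilde s^2)$ contributions coming from $s=-\tilde s^2$. Your added observations --- that the higher coefficients $a_3,a_4,\dots$ enter only through multiples of $f_u$ and hence drop out of the determinants, and that the constant terms $6c_2(0,0,0)$ and $45\sqrt2\,c_3(0,0,0)$ follow immediately from $f_{vvvv}=(0,0,24c_2)$ and $f_{vvvvv}=(0,0,120c_3)$ --- are correct and consistent with the paper's computation.
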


\begin{proof}
Let us set $f={f_{{\rm n2}}}^s$.
 We set
  $$a_1=-\dfrac{f_{vv}\cdot f_u}{f_u\cdot f_u}\Big\vert_{v=0}, \quad a_2=-\dfrac{f_{vvv}\cdot f_u}{2f_u\cdot f_u}\Big\vert_{v=0}$$
  and
   $\tilde \eta=(va_1+v^2 a_2)\partial_u+\partial_v.$
   Then $\tilde \eta $ is a vector field and it satisfies $\tilde \eta f(u(\tilde s),0)=0$,
   and$\langle f_u,\tilde \eta^2f\rangle(u(\tilde s),0)=\langle f_u,\tilde \eta^3f\rangle(u(\tilde s),0)=0$.
   Thus this  $\tilde\eta$ satisfies \eqref{null}.
   Since
   $$\tilde \eta^2 f|_{v=0}=a_1f_u+f_{vv},\quad \tilde \eta^3 f|_{v=0}=f_{vvv}++2a_2f_u,$$
$$\tilde \eta^4 f|_{v=0}=f_{vvvv}+3a_1^2f_{uu}+6a_1f_{uvv}+3a_1(a_1)_uf_{uu},$$
$$\tilde \eta^5 f|_{v=0}=f_{vvvvv}+8(a_1)_ua_2f_u+20a_2f_{uvv}+20a_1a_2f_{uu}+12a_1(a_2)_uf_u+10a_1f_{uvvv},$$
it hold that $$\tilde \eta^3 f(u(\tilde s),0)=\ell \tilde \eta^2 f(u(\tilde s),0),$$
where
 $$\ell=-\dfrac{3\tilde s^4f_{24}(0,s)f_{34}(0,-\tilde s^2)}{1-\tilde s^4f_{24}(0,-\tilde s^2)c_{0}(0,-\tilde s^2)f_{34}(0,-\tilde s^2)+\tilde s^4f_{34}(0,-\tilde s^2)^2}.$$
Therefore,  we obtain the assertion at  $(u(\pm\tilde s), 0, -\tilde s^2)$.
\end{proof}
The coefficients $c_2(0,0,0)$ and $c_3(0,0,0)$ control the bias and the secondary cuspidal curvature
at cuspidal cross caps appearing a deformation,
and $(c_{0})_u(0,0)$, $(c_{2})_u(0,0,0)$ and $(c_{3})_u(0,0,0)$ control the changes of these invariants at cuspidal cross caps in the deformation.

\subsection{Geometry on trajectory of singular points}
We give a geometric meaning of
the lowest order coefficients $f_{24}(0, 0)$ and $f_{34}(0, 0)$
including the deformation parameters.
The trajectory of the singular points $S_2({f_{{\rm n2}}}^{-\tilde{s}^2})$ 
for the deformation of the cuspidal $S_1^\pm$ singularities $f={f_{{\rm n2}}}^s$ 
is a space curve passing through the origin. It is parameterized by
$$
\gamma (\tilde{s}):={f_{{\rm n2}}}^{-\tilde{s}^2}(u(\tilde{s}),0),
$$
where $u(\tilde{s})$ is given in Theorem \ref{thm:singcurve}.
Then the curvature $\kappa$ of $\gamma$ as a space curve at $\tilde s=0$ satisfies
$\kappa=2(f_{21}^2(0)+f_{31}^2(0))^{1/2}$.
Moreover, if $f_{21}^2(0)+f_{31}^2(0)\ne0$, then
$$f_{24}(0,0)=\dfrac{\kappa\tau f_{31}(0)-f_{21}(0)d_{20}\kappa'+3\kappa \dfrac{df_{31}}{du}(0)}{3\kappa d_{20}^2}$$
and
$$f_{34}(0,0)=-\dfrac{\kappa\tau f_{21}(0)+f_{31}(0)d_{20}\kappa'-3\kappa \dfrac{df_{31}}{du}(0)}{3\kappa d_{20}^2}$$
hold, where $\tau$ is the torsion of $\gamma$, and  
$\kappa'=d\kappa/d\tilde{s}$.
In fact, it hold that\\
$\kappa'=\dfrac{6f_{21}(0)d_{20}}{\sqrt{f_{21}(0)^2+f_{31}(0)^2}}f_{24}(0,0)+\dfrac{6f_{31}(0)d_{20}}{\sqrt{f_{21}(0)^2+f_{31}(0)^2}}f_{34}(0,0)$\\
\hspace{90mm}$+\dfrac{6(f_{21}(0)\dfrac{df_{21}}{du}(0)+f_{31}(0)\dfrac{df_{31}}{du}(0))}{d_{20}\sqrt{f_{21}(0)^2+f_{31}(0)^2}}$\\
and\\
$\tau=\dfrac{3f_{31}(0)d_{20}^2}{\sqrt{f_{21}(0)^2+f_{31}(0)^2}}f_{24}(0,0)+\dfrac{6f_{21}(0)d_{20}^2}{\sqrt{f_{21}(0)^2+f_{31}(0)^2}}f_{34}(0,0)$\\
\hspace{90mm}$+\dfrac{3(-f_{31}(0)\dfrac{df_{21}}{du}(0)+f_{21}(0)\dfrac{df_{31}}{du}(0))}{d_{20}\sqrt{f_{21}(0)^2+f_{31}(0)^2}}.$
\appendix
\section{Geodesic and normal curvatures of even functions}
\begin{proposition}\label{prop:kgkn}
Let a curve $\hat c:(\R,0)\to(\R^3,0)$ on a frontal in $\R^3$ satisfy that the $4$-jets of $\hat c_1(v), \hat c_2(v), \hat c_3(v)$ are even functions.
Here, we set $\hat c=(\hat c_1, \hat c_2, \hat c_3)$.
Let $\nu (v)$  a unit normal vector field of the frontal along $\hat c$.
Then the geodesic and normal curvatures at $v=0$ satisfy that
\begin{align*}
\kappa_g&=\dfrac{1}{3}\ep\dfrac{\det(\hat c_{vv},\hat c_{vvvv},\nu)}
{|\hat c_{vv}|^3}\Big\vert_{v=0},\\
\kappa_n&=\dfrac{1}{3}\dfrac{\hat c_{vvvv}\cdot\nu}
{|\hat c_{vv}|^2}\Big\vert_{v=0},
\end{align*}
where $\ep\in\{+1,-1\}$ is determined whether limit is taken from $v>0$ or $v<0$.
\end{proposition}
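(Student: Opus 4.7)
The plan is to work directly from the standard formulas
$$
\kappa_g(v)=\dfrac{\det(\hat c_v,\hat c_{vv},\nu)}{|\hat c_v|^3},\qquad \kappa_n(v)=\dfrac{\hat c_{vv}\cdot\nu}{|\hat c_v|^2},
$$
and take the limit as $v\to 0^\pm$. Since the $4$-jet of $\hat c$ is even, $\hat c_v(0)=0$, so the curve is singular at $v=0$ and both formulas appear indeterminate; the whole point is that the leading cancellations are governed by the two even-order derivatives $A:=\hat c_{vv}(0)$ and $B:=\hat c_{vvvv}(0)$.

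First I would write the Taylor expansions implied by the hypothesis,
$$
\hat c_v=vA+\tfrac{v^3}{6}B+O(v^4),\qquad \hat c_{vv}=A+\tfrac{v^2}{2}B+O(v^3),
$$
and expand $\nu=\nu_0+v\nu_1+\tfrac{v^2}{2}\nu_2+O(v^3)$. For the geodesic curvature, a direct expansion gives
$$
\hat c_v\times\hat c_{vv}=\tfrac{v^3}{2}A\times B+\tfrac{v^3}{6}B\times A+O(v^4)=\tfrac{v^3}{3}A\times B+O(v^4),
$$
while $|\hat c_v|^3=|v|^3|A|^3+O(v^4)$. Taking the quotient and then the limit produces
$\kappa_g=\tfrac{1}{3}\,\ep\,\det(A,B,\nu_0)/|A|^3$ with $\ep=\operatorname{sgn}(v)$, which is precisely the claimed formula.

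The normal curvature requires a bit more care because, at first sight, $\hat c_{vv}\cdot\nu=A\cdot\nu_0+O(v)$ and $|\hat c_v|^2=v^2|A|^2+O(v^4)$, so one needs the numerator to vanish up to order $v^2$. To see this I would differentiate the identity $\hat c_v\cdot\nu\equiv 0$ (valid because $\hat c$ lies on the frontal and $\nu$ is its unit normal) and read off from the $v^1$, $v^2$, $v^3$ coefficients the relations
$$
A\cdot\nu_0=0,\qquad A\cdot\nu_1=0,\qquad \tfrac12 A\cdot\nu_2+\tfrac16 B\cdot\nu_0=0.
$$
Substituting these into the Taylor expansion $\hat c_{vv}\cdot\nu=A\cdot\nu_0+vA\cdot\nu_1+\tfrac{v^2}{2}(A\cdot\nu_2+B\cdot\nu_0)+O(v^3)$ collapses the first two terms and, via $A\cdot\nu_2=-\tfrac13 B\cdot\nu_0$, converts the third into $\tfrac{v^2}{3}B\cdot\nu_0$. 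Dividing by $v^2|A|^2+O(v^4)$ yields $\kappa_n=\tfrac{1}{3}\,B\cdot\nu_0/|A|^2$ at $v=0$, which is the stated formula (and is independent of $\ep$, as it must be since $\kappa_n$ is insensitive to orientation reversal).

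The main obstacle is really the $\kappa_n$ computation: the naive leading term in $\hat c_{vv}\cdot\nu$ is $A\cdot\nu_0$, which one must cancel using the frontal condition, and then a second cancellation at order $v$ forces one down to the order-$v^2$ term — where the unexpected factor of $\tfrac{1}{3}$ comes from the relation $A\cdot\nu_2=-\tfrac13 B\cdot\nu_0$. Once that identity is in hand, the remaining algebra is routine.
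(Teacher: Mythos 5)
Your proposal is correct and follows essentially the same route as the paper: both expand $\hat c_v$ using the evenness hypothesis, compute the limits of the classical formulas for $\kappa_g$ and $\kappa_n$, and use the frontal condition $\hat c_v\cdot\nu\equiv 0$ to produce the cancellation $A\cdot\nu_0=0$ up to order $v^2$ in the numerator of $\kappa_n$ (the paper phrases this as the identity $(a+v^2b(v))\cdot\nu\equiv 0$ with $\hat c_v=v(a+v^2b(v))$ rather than via a Taylor expansion of $\nu$, but the content is identical). The coefficient bookkeeping, including the factor $1/3$ and the sign $\ep$, matches the paper's computation.
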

\begin{proof}
Since $\hat c$ is an even function, one can set
$$
\hat c_v(v)=va+v^3b(v)
$$
by a constant vector $a$ and $b(v)$.
By a direct calculation, 
$$
\hat c_{vv}(v)=a+3v^2b(v)+v^3b'(v),
$$
$$
\hat c_{vvvv}(v)=6b(v)+6vb'(v)+12v^2b'(v)+9v^2b''(v)+v^3b'''(v),
$$
 hold where $'=d/dv$.
Therefore the geodesic curvature is
$$
\kappa_g=\dfrac{\det(\hat c_v,\hat c_{vv},\nu)}{|c_v|^3}\Big\vert_{v=0}=\ep\dfrac{2\det(a,b(0),\nu)}{|c_1|^3}
$$
On the other hand, we calculate
$$
\dfrac{\det(\hat c_{vv},\hat c_{vvvv},\nu)}{|c_{vv}|^3}\Big\vert_{v=0}=\dfrac{6\det(a,b(0),\nu)}{|c_1|^3}.
$$
Thus we obtain the assertion for the geodesic curvature.

By $\hat c_v\cdot\nu\equiv0$, it hold that $(a+v^2b(v))\cdot\nu\equiv0$ for any $v$.
Thus by $\nu\cdot a=-v^2b(v)\cdot\nu$, we have
$$
\kappa_n=\dfrac{\hat c_{vv}\cdot\nu}{|c_v|^2}\Big\vert_{v=0}=\dfrac{2b(0)\cdot\nu}{|c_1|^2}.
$$
On the other hand,
it hold that
$$
\dfrac{\hat c_{vvvv}\cdot\nu}
{|\hat c_{vv}|^2}\Big\vert_{v=0}=\dfrac{6b(0)\cdot\nu}{|c_1|^2}.
$$
Thus we obtain the assertion of the normal curvature.

\end{proof}

\begin{acknowledgements}
The author would like to thank Kentaro Saji for fruitful discussion and comments.
\end{acknowledgements}

\medskip
{\footnotesize
\begin{flushright}
\begin{tabular}{l}

Department of Mathematics,\\
Graduate School of Science, \\
Kobe University, \\
Rokkodai 1-1, Nada, Kobe \\
657-8501, Japan\\
E-mail: {\tt 231s010s@stu.kobe-u.ac.jp}
\end{tabular}
\end{flushright}

\end{document}